\newtheorem{theorem}{Theorem}[section]
\newtheorem{lemma}[theorem]{Lemma}
\newtheorem{Proposition}[theorem]{Proposition}
\theoremstyle{definition}
\newtheorem{definition}[theorem]{Definition}
\theoremstyle{remark}
\numberwithin{equation}{section}
\begin{document}

\title{Codimension One Structurally Stable Chain Classes}


\author{Xiao Wen }
\address{School of Mathematics and System Science, Beihang University, Beijing 100191, China}
\curraddr{} \email{wenxiao@buaa.edu.cn}
\thanks{}

\author{Lan Wen}
\address{LMAM, School of Mathematical Sciences, Peking University, Beijing 100871, China}
\curraddr{} \email{lwen@math.pku.edu.cn}
\thanks{This work is partially supported by the Balzan Research
Project of J. Palis. XW is supported by NSFC 11301018. LW is
supported by NSFC 11231001.}

\subjclass[2010]{Primary 37D20, 37D30, 37C20}

\keywords{Structural Stability, Chain Component, Homoclinic Class,
Hyperbolicity.}

\date{}

\dedicatory{}

\begin{abstract}
The well known stability conjecture of Palis and Smale states that
if a diffeomorphism is structurally stable then the chain recurrent
set is hyperbolic. It is natural to ask if this type of results is
true for an individual chain class, that is, whether or not every
structurally stable chain class is hyperbolic. Regarding the notion
of structural stability, there is a subtle difference between the
case of a whole system and the case of an individual chain class.
The later case is more delicate and contains additional
difficulties. In this paper we prove a result of this type for the
later case, with an additional assumption of codimension 1.
Precisely, let $f$ be a diffeomorphism of a closed manifold $M$ and
$p$ be a hyperbolic periodic point of $f$ of index 1 or $\dim M-1$.
We prove if the chain class  of $p$ is structurally stable then it
is hyperbolic. Since the chain class of $p$ is not assumed in
advance to be locally maximal, and since the counterpart of it for
the perturbation $g$ is defined not canonically but indirectly
through the continuation  $p_g$ of $p$, the proof is quite delicate.
\end{abstract}

\maketitle

\section{Introduction}
Let $M$ be a compact $C^\infty$ Riemannian manifold without
boundary, and $f:M\to M$ be a diffeomorphism.  Denote ${\rm
Diff}(M)$ the space of diffeomorphisms of $M$ with the
$C^1$-topology.

It is  understood that the main dynamics of a system appears in the
part that exhibits certain recurrence, as it contains the long run
behavior of all orbits. The most general notion of recurrence is the
so called chain recurrence. Its definition is standard, but we
include it here for completeness.

Let  $\delta>0$ be given. A finite sequence
$\{x_i\}_{i=0}^{n}\subset M$ is called a {\it $\delta$-pseudo orbit}
of $f$ if $d(f(x_i),x_{i+1})<\delta$ for all $0\leq i \leq n-1$,
where $d$ is the distance on $M$ induced by the Riemannian metric.
For two points $x,y \in M$, we write $x\dashv y$ if, for any $\delta
>0$, there is a $\delta$-pseudo orbit   of $f$ going from $x$ to $y$, that is,
there is  a $\delta$-pseudo orbit $\{x_i\}_{i=0}^{n}$, where $n$
depends on $\delta$, such that $x_0=x $ and $x_{n}=y$.
 A point $x\in M$ is called
 {\it chain recurrent} if $x\dashv x$.  Thus a chain recurrent point is one with
  a (very weak) recurrence in the sense of pseudo orbits. The set of chain recurrent
 points of $f$ is called the {\it chain recurrent  set}
  of $f$, denoted by ${{\rm CR}}(f)$. It is easy to see that ${{\rm CR}}(f)$
  is closed and $f({{\rm CR}}(f))={{\rm CR}}(f)$. Clearly,
  $$\overline{{\rm Per}(f)}\subset \Omega(f)\subset {{\rm CR}}(f),$$ where ${\rm Per}(f)$ is the
  set of periodic points and $\Omega(f)$ is the non-wandering set of
  $f$. By Conley \cite{Con}, any point that is not chain recurrent must be in the basin of
  some  attracting set subtracting the attracting set itself, hence exhibits no recurrence of any type. Thus  chain recurrence
  is the most general version of recurrence.

 An  important notion in dynamical systems  coming from
 Physics and Mechanics  is the so called structural stability.
 Precisely, a diffeomorphism $f$ is  {\it structurally stable} if there
is a $C^1$ neighborhood ${\mathcal U}$ of $f$ in ${\rm Diff}(M)$
such that, for every $g\in {\mathcal U}$, there is a homeomorphism
$h: M\to M$ such that $h\circ f=g\circ h$. Since such a
homeomorphism $h$ preserves orbits, a structurally stable system is
one that has robust dynamics, that is, one whose orbital structure
remains unchanged under perturbations.

The non-recurrent part of dynamical systems is fairly robust with
respect to perturbations. But the recurrent part is fragile. To
survive from perturbations, it needs  the condition of (various
versions of) hyperbolicity. For instance, a single periodic orbit is
structurally stable if and only if it is hyperbolic, meaning no
eigenvalue of modulus 1. For the whole system $f$ to be structurally
stable, a crucial condition needed is that ${\rm CR}(f)$, the set
that captures all the recurrence, is a hyperbolic set. Recall a
compact invariant set $\Lambda\subset M$ of $f$ is called {\it
hyperbolic} if, for
 each $x\in \Lambda$, the tangent space $T_x M$  splits into
 $ T_x M=E^s(x)\oplus E^u(x)$ such that
$$ Df(E^s(x))=E^s(f(x)), \ Df(E^u(x))=E^u(f(x))$$
and, for some constants $C\ge 1$ and $0<\lambda<1$,
$$ |Df^n(v)|\le C\lambda^n |v|, \ \forall x\in \Lambda, v\in E^s(x), n\ge 0,$$
$$ |Df^{-n}(v)|\le C\lambda^n |v|, \ \forall x\in \Lambda, v\in E^u(x), n\ge 0.$$

Briefly, a hyperbolic set is one at which tangent vectors split into
two directions, contracting and expanding upon iterates,
respectively, with uniform exponential rates. This definition
extends the hyperbolicity condition from a single periodic orbit to
a general compact invariant set. It is closely related to structural
stability. Indeed, the following remarkable result, known as the
stability conjecture of Palis and Smale \cite{PS1}, is fundamental
to dynamical systems:

\bigskip
\noindent{\bf Theorem} (Ma\~n\'e \cite{Man2}). \ {\it
 If a diffeomorphism $f$ is structurally stable then
${\rm CR}(f)$ is hyperbolic.}

\bigskip

In this paper we consider a localized  and more delicate version of
structural stability.  It is for an individual ``basic piece" of the
dynamics, rather than the whole system. Note that, restricted to
${{\rm CR}}(f)$, the relation $x\sim y$ (meaning $x\dashv y$ and
$y\dashv x$) is an equivalence relation. The equivalence classes are
called {\it chain classes} of $f$, which are each compact and
invariant under $f$. Any chain class can not be decomposed into two
disjoint compact invariant sets, hence is regarded as a basic piece
of the system. Generally, a diffeomorphism may have infinitely many
chain classes, a phenomenon that causes a great deal of complexity
of the dynamics. For any periodic point of $f$, denote $C_f(p)$ the
(unique) chain class of $f$ that contains $p$.

A hyperbolic periodic point has its natural ``continuation" under
perturbations. Precisely, let $p\in M$ be a hyperbolic periodic
point of $f$ of period $k$. Then there exist a compact neighborhood
$U$ of ${\rm Orb}(p)$ in $M$ and a $C^1$-neighborhood
$\mathcal{U}(f)$ of $f$ such that  for any $g\in \mathcal{U}(f)$,
the maximal invariant set
$$\bigcap_{n=-\infty}^{\infty} g^n(U)$$
of $g$ in $U$ consists of a single periodic orbit $O_g$ of $g$ of
the same period as $p$, which is hyperbolic with ${\rm
Ind}(O_g)={\rm Ind}(p)$. Here ${\rm Ind}(p)$ denotes the {\it index}
of $p$, which is the dimension of the stable manifold of $p$.  The
neighborhood $U$ can be chosen to be the union of $k$ arbitrarily
small disjoint balls, each containing exactly one point of ${\rm
Orb}(p)$ and one point of $O_g$. This identifies the {\it
continuation} $p_g$ of $p$ under $g$. Thus the notion of
continuation $p_g$ of $p$ is defined for $g$ sufficiently close to
$f$.

However, there is no ``continuation" well-defined for a general
compact invariant set. Indeed, for a  general compact invariant set
$\Lambda$ of $f$ (not a specific one such as $\Omega(f)$, ${{\rm
CR}}(f)$, etc.), there is no canonical way to define the
``counterpart" of $\Lambda$ for $g$ that is near $f$. Consequently,
there is no canonical way to define such a general $\Lambda$ to be
``structurally stable". Nevertheless for the case of a chain
recurrent class that contains a hyperbolic periodic point $p$, there
is an indirect way as follows to define its structural stability,
through the continuation $p_g$ of $p$. Let $C_g(p_g)$ denote the
(unique) chain class of $g$ that contains $p_g$.

\begin{definition}
Let $p$ be a hyperbolic  periodic point of $f$.  We say that
$C_f(p)$ is $C^1$-{\it structurally stable} if there is a
neighborhood $\mathcal{U}$ of $f$ in ${\rm Diff}(M)$ such that, for
every $g\in \mathcal{U}$, there is a homeomorphism $h: C_f(p)\to
C_g(p_g)$ such that $h\circ f|_{C_f(p)}=g\circ h|_{C_f(p)}$, where
$p_g$ is the continuation of $p$.
\end{definition}

Note that, while $h$ in this definition preserves periodic points of
$C_f(p)$, it is not clear if it preserves individual continuations.
For instance, it is not clear if $h(p)=p_g$. Indeed, such an
``indirect" definition of structural stability makes the proof of
the following main theorem of this paper quite delicate:

\bigskip
\noindent {\bf Theorem A.} Let $f$ be a diffeomorphism of $M$ and
$p$ be a hyperbolic periodic point of $f$ of index $1$ or ${\rm dim}
M-1$. If the chain class $C_f(p)$ of $p$ is structurally stable,
then $C_f(p)$ is hyperbolic.

\bigskip

This result is in the spirit of the  stability conjecture, but more
delicate as just indicated. In particular, $C_f(p)$ is not assumed
in advance to be locally maximal (meaning being the maximal
invariant set in a neighborhood of itself),  hence periodic orbits
that are proved  to exist in a neighborhood of $C_f(p)$ are hardly
identified to be actually inside $C_f(p)$.  This is a serious
difficulty that appears in the proof.

 A special strategy we will use to prove Theorem A is  first
to prove the theorem for a generic  $f$, that is, for $f$ in a
residual family of diffeomorphisms. Most part of this paper will be
devoted to this special case. Then, for such a generic $f$, $C_f(p)$
is shadowable, because  hyperbolicity implies the shadowing
property. Since a topological conjugacy, even one that is defined on
a chain class only, preserves the shadowing property, by picking up
a generic diffeomorphism near $f$, we see that a structurally stable
chain class $C_f(p)$ is robustly shadowable. But, according to a
previous result of X. Wen et. al. \cite{WGW}, a robustly shadowable
chain class must be hyperbolic. This will be the way how Theorem A
is proved.

\section{Periodic points in $C_f(p)$}

For a hyperbolic periodic point $p$ of $f$, denote by $H(p, f)$ the
{\it homoclinic class} of $p$, that is, the closure of the set of
transverse homoclinic points of ${\rm Orb}(p)$. We say that two
hyperbolic periodic points $p$ and $q$ of $f$ are {\it
homoclinically related} if $W^s({\rm Orb(p)})\pitchfork W^u({\rm
Orb}(q))\not=\emptyset$ and $W^s({\rm Orb}(q))\pitchfork W^u({\rm
Orb}(p))\not=\emptyset$. Note that two hyperbolic periodic points
that are homoclinically related have the same index. A homoclinic
class $H(p, f)$ contains a dense subset of periodic points that are
homoclinically related to $p$.

In the main body of this paper we will work with a generic
diffeomorphism $f$ with the following properties:

\begin{Proposition}\label{Generic} There is a residual
subset $\mathcal{R}\subset {\rm Diff}(M)$ such that every
$f\in\mathcal{R}$ satisfies the following conditions:
\begin{itemize}
\item [1.] $f$ is Kupka-Smale, meaning periodic points of $f$ are each hyperbolic
and their stable and unstable manifolds meet transversally $($see
\cite{PM}).

\item [2.] Any chain  class of $f$ containing a hyperbolic periodic point
$p$ of $f$ equals $H(p, f)$ $($see \cite{BC}).

\item [3.] For any pair of
hyperbolic periodic points $p$ and $q$ of $f$, either $H(p, f) =
H(q, f)$ or $H(p, f)\cap H(q, f) =\emptyset$.

\item [4.] If two hyperbolic periodic points $p$ and $q$ of $f$ are in the same
topologically transitive set and ${\rm Ind}(p)\leq{\rm Ind}(q)$,
then $W^s({\rm Orb}(q), f)\pitchfork W^u({\rm Orb}(p), f)$ $\neq
\emptyset$ $($see \cite{GW}).

\item [5.] Every chain transitive set of $f$ is a Hausdorff limit of
periodic orbits of $f$ $($see \cite{Cro3}).

\end{itemize}
\end{Proposition}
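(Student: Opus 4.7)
The plan is to exhibit, for each of the five itemized properties, a residual subset $\mathcal{R}_i\subset {\rm Diff}(M)$ on which that property holds, and then set $\mathcal{R}=\bigcap_{i=1}^{5}\mathcal{R}_i$. Since ${\rm Diff}(M)$ with the $C^1$-topology is a Baire space, a finite (or even countable) intersection of residual subsets is residual, so $\mathcal{R}$ will be the set required. In this sense the proposition is not so much a theorem to be proved as an assembly of five established $C^1$-generic results, each already cited in the statement.

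For property (1) I would invoke the classical Kupka--Smale theorem (Palis--Melo \cite{PM}): for every integer $n\ge 1$, the subset $\mathcal{KS}_n\subset {\rm Diff}(M)$ of diffeomorphisms whose periodic points of period at most $n$ are all hyperbolic and whose stable/unstable manifolds meet transversally is open and dense, and $\mathcal{R}_1=\bigcap_{n\ge 1}\mathcal{KS}_n$ is residual. For properties (2) and (3), I would appeal to Bonatti--Crovisier \cite{BC}, which yields a residual set on which the chain recurrence class of any hyperbolic periodic point coincides with its homoclinic class, and on which two homoclinic classes of hyperbolic periodic points are either equal or disjoint; working inside $\mathcal{R}_1$ one organizes the construction as a countable intersection indexed by the continuations of hyperbolic periodic orbits in small $C^1$-neighborhoods, obtaining residual sets $\mathcal{R}_2$ and $\mathcal{R}_3$.

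For property (4) I would quote Gan--Wen \cite{GW}, which produces a residual $\mathcal{R}_4$ on which, whenever two hyperbolic periodic points lie in a common topologically transitive invariant set and ${\rm Ind}(p)\le{\rm Ind}(q)$, one has $W^s({\rm Orb}(q),f)\pitchfork W^u({\rm Orb}(p),f)\neq\emptyset$. For property (5) I would quote Crovisier \cite{Cro3}, giving a residual $\mathcal{R}_5$ on which every chain transitive set is the Hausdorff limit of a sequence of periodic orbits. Taking $\mathcal{R}=\bigcap_{i=1}^{5}\mathcal{R}_i$ then yields a residual subset of ${\rm Diff}(M)$ on which all five properties hold simultaneously.

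I do not anticipate any serious obstacle. The only mildly nontrivial bookkeeping is to check, for each reference, that the cited theorem is formulated (or can be reformulated without change of argument) as the existence of a \emph{residual} set carrying the property, rather than merely a dense $G_\delta$ of some restricted open set or a "$C^1$-generic" statement phrased otherwise; this is routine in all five cases.
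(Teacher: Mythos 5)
Your proposal is correct and follows essentially the same approach as the paper, which gives no separate proof but simply assembles the cited generic results and takes their (countable) intersection. The only small observation the paper adds is that Item 3 need not be sourced independently, since it follows directly from Item 2 (chain classes are equivalence classes, hence equal or disjoint, and under Item 2 homoclinic classes of hyperbolic periodic points coincide with chain classes).
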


Note that Item 3 is a consequence of Item 2. Also note that,
throughout this paper, the letter $\mathcal{R}$ will denote the
residual set described in this proposition.

\begin{Proposition}\label{Proposition1}
 Let $f\in\mathcal{R}$, and let $p\in M$ be a hyperbolic periodic point of $f$.
 If $C_f(p)$ is structural stable, then every periodic point $q\in
C_f(p)$ of $f$ is homoclinically related to $p$.
\end{Proposition}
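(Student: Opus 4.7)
The plan is to use the residual properties collected in Proposition \ref{Generic} to reduce the claim to showing that every periodic point $q\in C_f(p)$ shares the index of $p$, and then to invoke structural stability of $C_f(p)$ to rule out different indices.

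For the setup, since $f\in\mathcal{R}$, item 1 makes every periodic $q\in C_f(p)$ hyperbolic, item 2 identifies $C_f(p)=H(p,f)$, and item 3 forces $H(q,f)=H(p,f)$ because the two homoclinic classes share $q$ and therefore cannot be disjoint. Thus $p$ and $q$ lie in a common homoclinic class, which is in particular topologically transitive. If ${\rm Ind}(p)={\rm Ind}(q)$, then applying item 4 to both orderings of the pair $(p,q)$ yields both $W^s({\rm Orb}(q),f)\pitchfork W^u({\rm Orb}(p),f)\neq\emptyset$ and $W^s({\rm Orb}(p),f)\pitchfork W^u({\rm Orb}(q),f)\neq\emptyset$, so $p$ and $q$ are homoclinically related; this settles the equal-index case.

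It remains to exclude ${\rm Ind}(p)\neq{\rm Ind}(q)$; assume without loss of generality ${\rm Ind}(p)<{\rm Ind}(q)$. Item 4 still provides a transverse heteroclinic intersection $W^s({\rm Orb}(q),f)\pitchfork W^u({\rm Orb}(p),f)\neq\emptyset$, and since $p,q\in C_f(p)$ pseudo-orbits from $q$ back to $p$ also exist. A Hayashi-type connecting lemma then produces $C^1$-arbitrarily-small perturbations $g$ of $f$ realizing a genuine heterodimensional cycle between $p_g$ and $q_g$ inside $C_g(p_g)$, and further small perturbations $g'$ can generate new hyperbolic periodic orbits of some prescribed period inside $C_{g'}(p_{g'})$, in quantity exceeding the number that $C_f(p)$ contains at that period (a finite count, by Kupka-Smale). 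Structural stability supplies a conjugating homeomorphism $h\colon C_f(p)\to C_{g'}(p_{g'})$, which restricts to a bijection between periodic points of each period on the two sides; the resulting count mismatch is the desired contradiction.

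The main obstacle is the third paragraph: arranging the perturbations so that the newly created periodic orbits genuinely lie inside $C_{g'}(p_{g'})$ rather than escaping to a nearby but distinct chain class. This requires using the connecting lemma in a form that controls the chain class, not merely the topological limit, and it is natural to call on item 5 so that the new orbits can be tracked along pseudo-orbits staying close to $C_f(p)$. By contrast, the reduction to the index question and the same-index step are fairly direct consequences of items 1--4.
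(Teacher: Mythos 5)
Your reduction is right and matches the paper: for $f\in\mathcal{R}$, items 1--4 of Proposition \ref{Generic} reduce the claim to showing that all periodic points of $C_f(p)$ have the same index, and they handle the equal-index case directly. The different-index case is where the two arguments diverge, and it is precisely where your proposal has the gap you yourself flagged.

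Your third paragraph asks the connecting lemma to produce new hyperbolic periodic orbits of a fixed bounded period and then to certify that these new orbits actually lie inside $C_{g'}(p_{g'})$ so that the conjugacy $h$ forces a count mismatch. There is no mechanism in your outline for that certification, and the paper explicitly warns that $C_f(p)$ is not assumed locally maximal, so orbits created nearby can escape to a different chain class. Moreover, the periodic orbits produced by unfolding a heterodimensional cycle typically have unbounded periods, so matching a prescribed period $\le k$ is an additional unaddressed constraint. Item 5 does not repair this: approximating a chain-transitive set by periodic orbits in the Hausdorff sense does not say those orbits belong to the chain class.

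The paper routes around this entirely by never creating new periodic orbits. It fixes $k=\max\{\pi(p),\pi(p')\}$ and works with the finite set $\mathcal{P}_k(\,\cdot\,,C_\cdot(\,\cdot\,))$ of periodic orbits of period at most $k$ inside the chain class. On this set it introduces a purely topological relation $Q_1\sim Q_2$ (``topological heteroclinic cycle''): $W^s(Q_1)\cap W^u(Q_2)\neq\emptyset$ and $W^s(Q_2)\cap W^u(Q_1)\neq\emptyset$, with stable and unstable sets defined by asymptotics alone. This relation is preserved by the conjugating homeomorphism $h$, and any two orbits in a heteroclinic cycle automatically lie in the same chain class --- this is what makes ``staying inside the chain class'' a non-issue. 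Using Kupka--Smale and item 4, $\sim$ on $\mathcal{P}_k(f,C_f(p))$ coincides with ``same index,'' hence is an equivalence relation, and therefore so is its $h$-image on $\mathcal{P}_k(g,C_g(p_g))$. The conjugacy-invariant quantity is then $l=\max\{\#\mathcal{C}\}$ over equivalence classes. The connecting lemma (as in \cite{GW}) is used only to join two \emph{existing} orbits of different indices (only one pairing of $W^s/W^u$ needs to be connected, since the other intersects transversally by item 4) while preserving, by transversality, the cycles inside a maximal class. This enlarges the maximal equivalence class from $l$ to $l+1$ for a nearby $g$, contradicting invariance of $l$. If you want to salvage your counting idea, replace ``number of periodic orbits of period $\le k$'' by the paper's invariant ``maximal size of a $\sim$-class in $\mathcal{P}_k$'' and connect existing orbits rather than creating new ones; that is exactly what closes the gap.
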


\begin{proof}
We prove all periodic points of $f$ in $C_f(p)$ have the same index.
Suppose for contradiction there is a periodic point $p'\in C_f(p)$
with ${\rm Ind}(p')\neq {\rm Ind}(p)$. Let
$$k=\max\{\pi(p), \ \pi(p')\},$$
where $\pi$ denotes the period of a periodic point. Since $C_f(p)$
is structural stable, there is a $C^1$ neighborhood ${\mathcal U}$
of $f$ in ${\rm Diff}(M)$ such that, for any $g\in {\mathcal U}$,
there is a homeomorphism $h$ that conjugates $C_f(p)$ and
$C_g(p_g)$. For any $g\in {\mathcal U}$, denote ${\mathcal P}_k(g,
C_g(p_g))$ the set of (not necessarily hyperbolic) periodic orbits
of $g$  in $C_g(p_g)$ that has period less than or equal to $k$.
Then
$$h({\mathcal P}_k(f, C_f(p)))={\mathcal P}_k(g, C_g(p_g)).$$ Since $f$
is Kupka-Smale, ${\mathcal P}_k(f, C_f(p))$ is a finite set. Then
${\mathcal P}_k(g, C_g(p_g))$ has the same number of elements.

We need a topological version of heteroclinic cycle here. Precisely,
for a (not necessarily hyperbolic) periodic orbit $Q$ of $g$, define
the {\it stable manifold} of $Q$ to be
$$W^s(Q)=W^s(Q, g)=\{x\in M \ | \ d(g^n(x), Q)\to 0, n\to
+\infty\}.$$ Likewise for the {\it unstable manifold}  $W^u(Q)$.
Since $Q$ is not required to be hyperbolic, $W^s(Q)$ and $W^u(Q)$
are not necessarily differentiable manifolds. We say two not
necessarily hyperbolic periodic orbits $Q_1$ and $Q_2$ of $g$  form
a {\it heteroclinic cycle}, denoted $Q_1\sim Q_2$, if $W^s(Q_1)\cap
W^u(Q_2)\neq \emptyset$ and $W^s(Q_2)\cap W^u(Q_1)\neq \emptyset$.
Here it may not be meaningful to talk about transversality of these
intersections. The notion of heteroclinic cycle is standard, here we
just relax the requirement for the differentiability and
transversality. We will use this topological version of heteroclinic
cycle in the proof of this proposition only. Note that $Q_1$ and
$Q_2$ are in the same chain class because of the intersections
$W^s(Q_1)\cap W^u(Q_2)$ and $W^s(Q_2)\cap W^u(Q_1)$, and every point
in the intersections belongs to the same chain class. Thus the above
topological conjugacy $h$ preserves heteroclinic cycles. That is, if
$Q_1, Q_2\subset C_f(p)$ and $Q_1\sim Q_2$, then $h(Q_1)\sim
h(Q_2)$, and vise versa.

 Although the intersections in the heteroclinic cycles are not necessarily transverse,
 we prove that $\sim$ is an
equivalence relation on ${\mathcal P}_k(g, C_g(p_g))$, for every
$g\in {\mathcal U}$. (We could prove this for all periodic orbits in
$C_g(p_g)$. Nevertheless we are interested only in those with period
$\le k$.) We first prove this for $f$. Let $Q_1$ and $Q_2$ be two
periodic orbits of $f$ in ${\mathcal P}_k(f, C_f(p))$. If $Q_1$ and
$Q_2$ have the same index, by item (4) of Proposition \ref{Generic},
they form a (transverse) heteroclinic cycle. If $Q_1$ and $Q_2$ have
different indices, since $f$ is Kupka-Samle, either $W^s(Q_1, f)\cap
W^u(Q_2, f)=\emptyset$, or $W^s(Q_1, f)\cap W^u(Q_2, f)=\emptyset$,
hence $Q_1$ and $Q_2$ do not form a heteroclinic cycle. Thus
$Q_1\sim Q_2$ if and only if $Q_1$ and $Q_2$ have the same index,
hence $\sim$ is an equivalence relation on ${\mathcal P}_k(f,
C_f(p))$. But $h$ preserves heteroclinic cycles, hence $\sim$ is an
equivalence relation on ${\mathcal P}_k(g, C_g(p_g))$ too, and $h$
maps equivalence classes of ${\mathcal P}_k(f, C_f(p))$ to
equivalence classes of ${\mathcal P}_k(g, C_g(p_g))$.

Let
$$l_g=\max\{\sharp {\mathcal C}: {\mathcal C} \text{ is an equivalence class of} \
{\mathcal P}_k(g, C_g(p_g)) \text{ with respect to} \sim \},$$ where
$\sharp {\mathcal C}$ denotes the number of elements of ${\mathcal
C}$. Since $h$ preserves this number, $l_g$ is independent of $g\in
{\mathcal U}$, and will be denoted $l$ below. Now take an
equivalence class ${\mathcal C}$ of ${\mathcal P}_k(f, C_f(p))$ such
that $\sharp {\mathcal C}=l$. Let ${\mathcal
C}=\{Q_1,Q_2,\cdots,Q_l\}$. Note that ${\mathcal C}\neq {\mathcal
P}_k(f, C_f(p))$ as we have assumed for contradiction that periodic
orbits of ${\mathcal P}_k(f, C_f(p))$ do not have the same index.
There are two cases to consider:

\vskip 0.5 cm

{\noindent\bf Case 1.} ${\rm Orb}(p)\in {\mathcal C}$. In this case
${\rm Orb}(p)$  forms a (transverse) heteroclinic cycle with every
$Q_i\in {\mathcal C}$. Note that there is a periodic orbit $Q$
outside ${\mathcal C}$. Then ${\rm Orb}(p)$  does not form a
heteroclinic cycle with $Q$. Note that, by item (4) of Proposition
\ref{Generic}, either $W^s({\rm Orb}(p))\cap W^u(Q)\neq \emptyset$
or $W^u({\rm Orb}(p))\cap W^s(Q)\neq \emptyset$, according to which
paring of $W^s$ and $W^u$ for ${\rm Orb}(p)$ and $Q$ has adequate
dimensions. Thus there is only one paring  that needs be connected
and, as ${\rm Orb}(p)$ and $Q$ are in the same chain (actually
homoclinic) class $C_f(p)$, by the $C^1$ connecting lemma, they
indeed can be connected. Precisely, there is an arbitrarily small
$C^1$ perturbation $g$ of $f$ that creates a heteroclinic cycle of
$g$ associated with ${\rm Orb}(p_g)$ and $Q_g$. (See \cite{GW} for
some details of the perturbation.) That is, ${\rm Orb}(p_g)\sim
Q_g$. On the other hand, since the heteroclinic cycles formed by
${\rm Orb}(p)$ and $Q_i$, $i=1, ..., l$, respectively, are each
transverse, they survive if the perturbation is small enough. That
is, ${\rm Orb}(p_g)\sim (Q_i)_g$ for all $i=1, ..., l$. Of course
$Q_g, (Q_1)_g,$ ..., $(Q_l)_g$ are distinct if the perturbation is
small enough. Thus the equivalent class of ${\rm Orb}(p_g)$ contains
at least $l+1$ elements. This contradicts the definition of $l$.

\vskip 0.5 cm

{\noindent\bf Case 2.} ${\rm Orb}(p)\notin {\mathcal C}$. In this
case $Q_1$ forms a transverse heteroclinic cycle with every $Q_i\in
{\mathcal C}$. Note that ${\rm Orb}(p)$ is outside ${\mathcal C}$.
As discussed above, by the $C^1$ connecting lemma, there is an
arbitrarily small $C^1$ perturbation $g$ of $f$ that creates a
heteroclinic cycle associated with $(Q_1)_g$ and ${\rm Orb}(p_g)$.
That is, $(Q_1)_g\sim {\rm Orb}(p_g).$ If the perturbation is small
enough, the transverse heteroclinic cycle formed between $Q_1, ...,
Q_l$ survive. That is, $(Q_1)_g\sim (Q_i)_g$ for all $i=1, ..., l$.
Of course ${\rm Orb}(p_g), (Q_1)_g, ..., (Q_l)_g$ are distinct if
the perturbation is small enough. Thus the equivalence class of
${\rm Orb}(p_g)$, which is the equivalence class of  $(Q_1)_g$,
contains at least $l+1$ elements, contradicting the definition of
$l$. This proves that all periodic points in $C_f(p)$ have the same
index.

Thus, by item (4) of Proposition \ref{Generic}, every periodic point
$q\in C_f(p)$ is homoclinically related to $p$, proving Proposition
\ref{Proposition1}.

\end{proof}

The next proposition asserts that, in a structurally stable chain
class, eigenvalues of periodic orbits are uniformly and robustly
away from the unit circle.

\begin{Proposition}\label{Proposition22} Let $f\in\mathcal{R}$, and
let $p\in M$ be a hyperbolic periodic point of $f$. If $C_f(p)$ is
structural stable, then there are a constant $0<\lambda<1$ and a
neighborhood $\mathcal{U}$ of $f$ such that, for any
$g\in\mathcal{U}$ and any periodic point $q$ of $g$ homoclinically
related to $p_g$, the derivative $D_qg^{\pi(q)}$ has no eigenvalue
with modulus in $(\lambda,\lambda^{-1})$,  where ${\pi(q)}$ is the
period of $q$.
\end{Proposition}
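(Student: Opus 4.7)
The plan is to argue by contradiction using Franks' lemma, Hayashi's $C^1$ connecting lemma, and Proposition \ref{Proposition1} applied at a nearby generic parameter. Suppose the conclusion fails; then there are $g_n\to f$ in $C^1$ and periodic points $q_n$ of $g_n$ homoclinically related to $p_{g_n}$ with an eigenvalue $\mu_n$ of $D_{q_n}g_n^{\pi(q_n)}$ satisfying $|\mu_n|\neq 1$ and $|\mu_n|\to 1$. Since transverse homoclinic relations are $C^1$-open and $\mathcal{R}$ is residual, I may assume $g_n\in\mathcal{R}$. Because the structural stability of $C_f(p)$, restricted to a sufficiently small $C^1$-neighborhood of $g_n$, also realizes the structural stability of $C_{g_n}(p_{g_n})$, Proposition \ref{Proposition1} applies to $g_n$ and forces ${\rm Ind}(q_n)={\rm Ind}(p)$.

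Next I would invoke Franks' lemma to produce a $C^1$-small perturbation $g'_n$ of $g_n$, supported in an arbitrarily small neighborhood of ${\rm Orb}(q_n)$, which leaves the orbit pointwise fixed but modifies the derivative along it so that $\mu_n$ is shifted slightly across the unit circle. Since $|\mu_n|$ is close to $1$, the $C^1$-size of the required perturbation is controlled by $\bigl|\,|\mu_n|-1\bigr|\to 0$, so $g'_n$ still lies in the $C^1$-neighborhood of $f$ where $C_f(p)$ is structurally stable. Thus ${\rm Orb}(q_n)$ is a hyperbolic periodic orbit of $g'_n$ with ${\rm Ind}(q_n,g'_n)\neq{\rm Ind}(p)$. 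The codimension one hypothesis ${\rm Ind}(p)\in\{1,\dim M-1\}$ enters here via a case analysis ensuring that an eigenvalue can be chosen whose shift yields a saddle (not a sink or source), so that ${\rm Orb}(q_n)$ has at least a chance to remain in a saddle-type chain class instead of being forced into the basin of an attractor or repeller.

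The hard part will be ensuring that, after the index change, ${\rm Orb}(q_n)$ still belongs to the chain class of the continuation of $p$. Of the two transverse heteroclinic intersections $W^s(q_n)\pitchfork W^u(p_{g_n})$ and $W^u(q_n)\pitchfork W^s(p_{g_n})$ witnessing the original homoclinic relation, the index change leaves one of them of non-negative expected dimension, persisting robustly in $g'_n$, while the other acquires negative expected dimension and generically empties. To restore the missing direction I would apply Hayashi's $C^1$ connecting lemma inside the chain recurrent set, exploiting that $p_{g'_n}$ and ${\rm Orb}(q_n)$ remain chain-related through the surviving direction, to obtain a further $C^1$-small perturbation $g''_n$ creating a (possibly non-transverse) heteroclinic cycle between $p_{g''_n}$ and ${\rm Orb}(q_n)$, which places ${\rm Orb}(q_n)$ in $C_{g''_n}(p_{g''_n})$. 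Finally I would pick a generic $\hat{g}_n\in\mathcal{R}$ sufficiently $C^1$-close to $g''_n$ (still inside the structural stability neighborhood of $f$); then the continuation $\hat{q}_n$ of the hyperbolic orbit ${\rm Orb}(q_n)$ persists in $C_{\hat{g}_n}(p_{\hat{g}_n})$ with ${\rm Ind}(\hat{q}_n)\neq{\rm Ind}(p)$, contradicting Proposition \ref{Proposition1} applied to $\hat{g}_n$ and completing the argument.
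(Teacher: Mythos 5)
Your route is genuinely different from the paper's, and it has a gap that I do not see how to close. You try to shift the weak eigenvalue of $q_n$ across the unit circle by Franks' lemma, thereby changing $\mathrm{Ind}(q_n)$, then use a connecting lemma to put $\mathrm{Orb}(q_n)$ back in the chain class of $p$, and finally pass to a nearby generic $\hat g_n$ to contradict Proposition~\ref{Proposition1}. The problem is in the last two steps. Once $\mathrm{Ind}(q_n)\neq\mathrm{Ind}(p)$, the dimension count you yourself invoke shows that one of the two needed heteroclinic intersections must be non-transverse; a connecting-lemma perturbation can at best create such an intersection for the single map $g''_n$, but a non-transverse intersection between manifolds of total dimension $<\dim M$ is precisely the kind of coincidence that vanishes under a further generic perturbation. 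Since chain classes are not robust (this is exactly the ``not locally maximal'' difficulty emphasized in the introduction), there is no reason for the continuation of $\mathrm{Orb}(q_n)$ to lie in $C_{\hat g_n}(p_{\hat g_n})$, and without that membership Proposition~\ref{Proposition1} gives nothing. Even the connecting-lemma step itself is shaky: after the Franks perturbation you have at most a one-sided relation $W^s(q_n)\cap W^u(p)\neq\emptyset$, which does not by itself establish that $\mathrm{Orb}(q_n)$ and $p$ are in a common chain class, and the chain-transitivity hypotheses needed to run the connecting lemma for pseudo-orbits are not verified. A secondary sign that the route is off-track is that you appeal to the codimension-one hypothesis, whereas Proposition~\ref{Proposition22} holds for any index and the paper's proof never uses it.

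The paper avoids all of this by not changing the index at all. It perturbs near $\mathrm{Orb}(q)$ with an explicit bump-function modification that turns a small central interval (or disc, if $\mu$ is complex) through $q$ into a segment of fixed (resp.\ periodic) points, while carefully leaving untouched the relevant half-orbits of a pair of heteroclinic points $x^*\in W^s(q)\cap W^u(p)$ and $y^*\in W^u(q)\cap W^s(p)$. Because of that, the segment is chain-related to $p_{\tilde g}$ by construction, with no appeal to the connecting lemma or to genericity of $\tilde g$. The contradiction then comes from a conjugacy invariant that the homeomorphism $h$ \emph{does} preserve, namely the set of periodic points of bounded period: $C_f(p)$ would have to contain a non-discrete set of periodic orbits of fixed period, contradicting $f$ being Kupka--Smale. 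This is the crucial structural difference: your argument ultimately wants to transport \emph{index} information through $h$, but $h$ is only a homeomorphism and does not preserve indices (nor even continuations), whereas the paper's contradiction rests on the cardinality of low-period periodic orbits, which $h$ does preserve.
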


\begin{proof} We prove  by contradiction. Suppose  there are a
diffeomorphism $g$ arbitrarily $C^1$ close to $f$ and a periodic
point $q\in C_g(p_g)$ homoclinically related to $p_g$ such that
$D_qg^{\pi(q)}$ has an eigenvalue arbitrarily close to $1$.  Denote
$\mu$ the eigenvalue which is closest to $1$, i.e.,
$|\log\mu|\leq|\log\mu'|$ for all eigenvalues $\mu'$ of
$D_qg^{\pi(q)}$. For explicitness we assume $|\mu|<1$. The case
$|\mu|>1$  can be treated similarly. Note that the notion of being
homiclinically related requires hyperbolicity of the periodic orbits
and transversality between the stable and unstable manifolds, hence
rules out the case $|\mu|= 1$. Also, since being homiclinically
related is a robust property, while keeping $q$ and $p_g$
homiclinically related, by taking an arbitrarily $C^1$ small
perturbation  we can assume that $\mu$ has multiplicity $1$ and $g$
is ``locally linear" near $g^iq$ in the sense that  there is $r>0$
such that
$$g|_{B_r(g^iq)}=\exp_{g^{i+1}q}\circ D_{g^iq}g\circ
\exp_{g^iq}^{-1}$$ for any $0\leq i<\pi(q)$. Let $E^c(q)\subset
T_qM$ be the eigenspace of $D_{q}g^{\pi(q)}$ associated to $\mu$. It
is a line if $\mu$ is real or a plane if $\mu$ is complex. In the
second case by taking another arbitrarily small  perturbation we can
assume that $D_{q}g^{\pi(q)}|_{E^c(q)}$ is a rational rotation of
the plane. For $\eta>0$, denote the ball in $E^c(q)$ of radius
$\eta$ about the origin to be $E^c(q, \eta)$.

We construct a perturbation $\tilde{g}$ of $g$. Let
$\alpha(x):[0,+\infty)\to[0,+\infty)$ be a bump function which
satisfies (1) $\alpha|_{[0,1/3]}=1$, (2)
$\alpha|_{[2/3,+\infty)}=0$, (3) $0<\alpha|_{(1/3,2/3)}<1$ and (4)
$0\leq \alpha'(x)<4$ for all $x\in[0,+\infty)$.  For a small
$\eta>0$, define a real function $\beta:T_qM\to \mathbb{R}$ by
$$\beta(v)=|\mu|^{-1}\alpha(|v|/\eta)+(1-\alpha(|v|/\eta)).$$ Thus
 $\beta(v)=|\mu|^{-1}$ for $|v|\leq\eta/3$,
$1<\beta(v)<|\mu|^{-1}$ for $\eta/3<|v|<2\eta/3$, and $\beta(v)=1$
for $|v|\geq2\eta/3$. We always assume $\eta$ much less than $r$.
Define a perturbation $\tilde{g}$ of $g$ to be
$$\tilde{g}(x)=\exp_{g(q)}(\beta(v)
\cdot D_{q}g(v)), \ \ v=\exp_{q}^{-1}(x)$$ for $x\in B(q, \eta)$,
and define $\tilde{g}(x)=g(x)$ for $x \notin B(q, \eta)$. Briefly,
in addition to the act of the tangent map $D_{q}g$, the perturbation
stretches vectors of length $\le \eta/3$ by a constant factor
$|\mu|^{-1}$, and stretches vectors of length between $\eta/3$ and
$2\eta/3$ by a variable factor $1<\beta(v)<|\mu|^{-1}$, and leaves
alone vectors of length $\ge \eta$. Then $\tilde{g}$ is $C^1$ close
to $g$ if $|\mu|$ is sufficiently close to 1. We take $\eta$ small
so that the $\pi(q)$ balls $B(g^i(q), \eta)$ are mutually disjoint.
To simplify notations we regard $p$ and $q$ below as fixed points.
We prove that $\exp_q(\overline{E^c(q, \eta/3))}$, which is an
interval if $\mu$ is real or a 2-disc if $\mu$ is complex, is
contained in $C_{\tilde{g}}(p_{\tilde{g}})$.

Since $q$ and $p_g$ are homoclinically related with respect to $g$,
there is $x^*\in W^s(q, g)\cap W^u(p_g, g)$ and $y^*\in W^u(q,
g)\cap W^s(p_g, g)$. Since $g$ is locally linear, we may assume
$x^*\in \exp_q(E^s_r)$ and $y^*\in\exp_q(E^u_r)$. Also, we may
assume that the positive orbit of $x^*$ and the negative orbit of
$y^*$ both remain in $B(q, r)$. If $\eta$ is small enough, the
negative orbits of $x^*$ and the positive orbit of $y^*$ will be
unchanged under the perturbation $\tilde{g}$. Hence
$\tilde{g}^{-n}(x^*)\to p_g$ and $\tilde{g}^n(y^*)\to p_g$ as
$n\to+\infty$. Now we consider the positive orbit of $x^*$ and the
negative orbit of $y^*$ under $\tilde{g}$.

Denote
 $${G}:T_qM\to T_qM$$
  $$G(v)=\beta(v)\cdot D_qg(v).$$
 Note that for  $v$ near the origin,
  $$G(v)=\exp_{g(q)}^{-1}\circ
\tilde{g}\circ\exp_{q}(v).
$$ We prove ${G}^{-n}(v)\to 0$ for every
$v\in E^u(q)$. Since $G$ differs from $D_qg$ by a factor  $\beta(v)$
only,  $G$ preserves $E^u(q)$. Moreover,
$${G}^{-n}(v)=\beta^{-1}({G}^{-n}v)\cdots\beta^{-1}({G}^{-1}v)D_qg^{-n}(v)$$
for any $n\geq 1$.  Since $\mu$ is the eigenvalue of $D_qg$ closest
to the unit circle, and since $1\leq\beta(v)\leq|\mu|^{-1}$, the
factor $\beta$ is strictly weaker than any of the eigenvalue of
$E^u(q)$. Then ${G}^{-n}(v)\to 0$. Taking $v=\exp^{-1}(y^*)$ then
gives
$$\tilde{g}^{-n}(y^*)\to q$$
 as $n\to+\infty$.

We prove ${G}^n(v)\to\overline{E^c(q, \eta/3)}$ for every $v\in
E^s(q)$. Note that  $E^s(q)$ splits into a direct sum
$E^{ss}(q)\oplus E^c(q)$ hence we may write $v=v^{ss}+v^c$. Write
$${G}^n(v)=v^{ss}_n+v^c_n.$$  Since $1\leq\beta(v)\leq|\mu|^{-1}$,
$$|v_n^{ss}|=|\beta({G}^{n-1}v)\cdots\beta(v)D_qg^{n}(v^{ss})|\leq
|\mu|^{-n}|D_qg^{n}(v^{ss})|. $$ Since $\mu$ is strictly weaker than
any of the eigenvalue of $E^{ss}(q)$, we get
$$\lim_{n\to+\infty}|v_n^{ss}|=0.$$

Then we check  $E^c$. First consider the case when  $\mu$ is real.
Then $E^c$ is a line. By the definition of $G$, the closed interval
$[-\eta/3, \ \eta/3]$ of $E^c$ consists of fixed points of $G$.
Since $D_qg(v)=\mu v$ and  since $\beta(v)<|\mu|^{-1}$ for any
$v\notin [-\eta/3, \ \eta/3]$,
 $$G(v)=\beta(v)\cdot T_qg(v)$$
is a strictly decreasing function on $E^c\setminus[-\eta/3, \
\eta/3]$. Thus every $v\in E^c\setminus[-\eta/3, \ \eta/3]$
approaches under $G$ along $E^c$ to one of the two end points of the
interval. Taking $v=\exp^{-1}(x^*)$ and writing $v=v^{ss}+v^c$ then
gives
$$\tilde{g}^{n}(x^*)\to \exp_q[-\eta/3, \
\eta/3]=\exp_q(\overline{E^c(\eta/3))}.$$ Clearly, any interval of
fixed points is a chain transitive set, meaning its points are
mutually chain equivalent. Thus the whole interval
$\exp_q(\overline{E^c(\eta/3))}$ is contained in
$C_{\tilde{g}}(p_{\tilde{g}})$.

Since $C_f(p)$ conjugates $C_{\tilde{g}}(p_{\tilde{g}})$, $C_f(p)$
also contains an interval of fixed points. This contradicts that $f$
is Kupka-Smale, proving Proposition \ref{Proposition22} in the case
when $\mu$ is real.

The case $\mu$ is complex  is proved similarly. In this case  $E^c$
is a plane $P$ and $\exp_q(\overline{E^c(\eta/3))}$ is a disc. Note
that we have assumed that $D_{q}g^{\pi(q)}$ is conjugate to a
rational rotation of $P$. Hence the disc
$\exp_q(\overline{E^c(\eta/3))}$ consists of periodic points of $G$
of the same period.  Thus the proof goes the same as the case when
$\mu$ is real. This proves  Proposition \ref{Proposition22}.
\end{proof}

Let $\Lambda\subset M$ be an invariant set of $f$. A splitting
$T_\Lambda M=E\oplus F$ is called {\it $(m,\lambda)$-dominated},
where $m\ge 1$ and $0<\lambda<1$, if
$$Df(E(x))=E(f(x)), \ Df(F(x))=F(f(x))$$
and
$$\|Df^m|_{E(x)}\|\cdot\|Df^{-m}|_{F(f^mx)}\|<\lambda$$
for every $x\in \Lambda$.   Since the constants $m\ge 1$ and
$0<\lambda<1$ are uniform, a dominated splitting over $\Lambda$
always extends to the closure $\overline \Lambda$. (See \cite{BDP}.)

A dominated splitting demands relative rates between the two
subbundles, rather than individual rates of each, which is what a
hyperbolic splitting  demands. A hyperbolic splitting is
automatically a dominated splitting, but not vise versa. Note that
if the dimensions of the summands are fixed, the dominated splitting
is unique. Thus, besides the interest of its own, a dominated
splitting often serves as a (unique) candidate for a possible
hyperbolic splitting. Indeed, if there is ever a hyperbolic
splitting, it must be this.

A fundamental tool that ensures the existence of a dominated
splitting is the perturbation theory of periodic linear co-cycles
developed by Liao \cite{Liao1} and Ma\~n\'e \cite{Man}.  Let
$\pi:E\to \Lambda$ be a finite dimensional vector bundle and
$f:\Lambda\to \Lambda$ be a homeomorphism. A continuous map $A:E\to
E$ is called a {\it linear co-cycle} (or {\it bundle isomorphism})
if $\pi\circ A=f\circ \pi$, and if $A$ restricted to every fiber is
a linear isomorphism.  (Note that the letter $\pi$ here denotes the
bundle projection, but not the period of a periodic point as used
above and below. This is the only place in this paper where $\pi$ is
used in this way.) The topology of $\Lambda$ is not relevant to our
aim here, and we assume that $\Lambda$ has the discrete topology. We
say $A$ is {\it bounded} if there is $N>0$ such that ${\rm
max}\{\|A(x)\|, \|A^{-1}(x)\|\}\le N$ for every $x\in \Lambda$,
where $A(x)$ denotes $A|_{E(x)}$. For two linear co-cycles $A$ and
$B$ over the same base map $f:\Lambda\to \Lambda$, define
$$d(A, B)={\rm sup}_{x\in \Lambda}\{\|A(x)-B(x)\|, \|A^{-1}(x)-B^{-1}(x)\|\}.$$

 A periodic point $p\in \Lambda$ of
$f$ is called {\it hyperbolic} with respect to $A$ if $A^{\pi(p)}$
have no eigenvalues of absolute value 1, where ${\pi(p)}$ is the
period of $p$. As usual, we denote the contracting and expanding
subspaces of $p$ to be $E^s(p)$ and $E^u(p)$. Then
$E(p)=E^s(p)\oplus E^u(p)$. If every point in $\Lambda$ is periodic
of $f$, then $A$ is called a {\it periodic  linear co-cycle}. A
bounded periodic linear co-cycle $A$ is called a {\it star system}
if there is $\epsilon>0$ such that any $B$ with $d(B, A)<\epsilon$
has no non-hyperbolic periodic orbits. (This notion corresponds to
that of diffeomorphisms on the manifold $M$ but, since perturbations
on manifolds are less restrictive, the star condition on manifolds
is stronger. In fact
 it implies Axiom A and no-cycle.) The next fundamental result of Liao and
Ma\~n\'e says that, if $A$ is a star system, then the individual
hyperbolic splittings $E^s(p)\oplus E^u(p)$ of $p\in \Lambda$, put
together, form a dominated splitting. It also gives some estimates
for  rates on periodic orbits.

\begin{theorem}\label{PLC} (\cite{Liao1}, \cite{Man})
Let  $A:E\to E$ be a bounded periodic linear co-cycle over
$f:\Lambda\to \Lambda$. If $A$ is a star system, then there is
$\epsilon>0$ and three constants $m>0$, $C>0$ and $0<\lambda<1$ such
that, for any linear co-cycle $B$ over  $f$ with $d(B, A)<\epsilon$,
and any periodic point $q$ of $B$, the following conditions are
satisfied:

$(1)$  $\|B^m|_{E^s(q)}\|\cdot\|B^{-m}|_{E^u(f^mq)}\|<\lambda$.

$(2)$ Let $k=[\pi(q)/m]$, then
$$\prod_{i=0}^{k-1}\|B^m|_{E^s(f^{im}(q))}\|<C\lambda^k,$$
$$\prod_{i=0}^{k-1}\|B^{-m}|_{E^u(f^{-im}(q))}\|<C\lambda^k.$$
\end{theorem}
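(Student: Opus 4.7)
The plan is to argue by contradiction, leveraging the star hypothesis together with a perturbation scheme for periodic linear cocycles due to Liao and Ma\~n\'e. Once constants $\epsilon, m, \lambda$ realizing (1) are found, the product bound (2) will follow from (1) by a Pliss-type averaging argument combined with uniform hyperbolicity estimates transported to nearby cocycles.

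For item (1), suppose for contradiction that no such uniform $\epsilon, m, \lambda$ exist. Then one extracts a sequence $B_n$ with $d(B_n, A)\to 0$, integers $m_n\to\infty$, and periodic points $q_n\in\Lambda$ that are hyperbolic with respect to $B_n$, satisfying $\|B_n^{m_n}|_{E^s(q_n)}\|\cdot\|B_n^{-m_n}|_{E^u(f^{m_n}q_n)}\|\ge 1-1/n$. Pick unit vectors $v_n\in E^s(q_n)$ and $w_n\in E^u(f^{m_n}q_n)$ realizing these norms. The heart of the argument is the Liao--Ma\~n\'e rotation lemma: construct a fiberwise perturbation $\tilde B_n$ of $B_n$ supported only in fibers above the orbit of $q_n$, obtained by composing $B_n$ at each iterate with a small rotation, such that the cumulative effect of the $\pi(q_n)$ rotations carries $v_n$ along the trajectory into an $E^u$-direction after one full period. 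The near-equality of the two norms is precisely what allows each individual rotation to be made small, so that $d(\tilde B_n, A)\to 0$. The result is a periodic point of $f$ that is non-hyperbolic under $\tilde B_n$ (an eigenvalue of modulus one appears on an invariant plane), contradicting the star assumption for $n$ large.

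For item (2), fix the $m, \lambda$ delivered by (1). The uniform $(m,\lambda)$-domination on periodic orbits, together with the star condition transported to nearby cocycles, forces the block-averaged log-norm $\frac{1}{k}\sum_{i=0}^{k-1}\log\|B^m|_{E^s(f^{im}q)}\|$ of any periodic point $q$ of any $B$ with $d(B,A)<\epsilon$ to be bounded above by a fixed negative constant; otherwise a second application of the Liao--Ma\~n\'e rotation along an orbit block of weak stable contraction would again produce a non-hyperbolic periodic orbit nearby. A Pliss-type lemma converts this uniform negative average into the exponential product bound $\prod_{i=0}^{k-1}\|B^m|_{E^s(f^{im}q)}\|<C\lambda^k$, with $C$ absorbing the $\pi(q)-mk<m$ leftover iterates. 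The estimate for $E^u$ is obtained by the symmetric argument applied to $B^{-1}$.

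The main obstacle is the quantitative Liao--Ma\~n\'e rotation construction feeding the star assumption in (1), and its reuse in (2). One must distribute a total rotation sufficient to transport $v_n$ into the $E^u$-direction over the $\pi(q_n)$ fibers above the orbit while keeping each fiberwise perturbation within the $\epsilon$-neighborhood of $A$, and one must arrange that the resulting cocycle realizes an honest eigenvalue of modulus one rather than a mere near-miss. The governing estimate is a geometric angle-vs-norm comparison: when $\|B_n^{m_n}|_{E^s(q_n)}\|\cdot\|B_n^{-m_n}|_{E^u(f^{m_n}q_n)}\|$ is close to $1$, the forward iterates $B_n^{im}(v_n)$ and the backward iterates $B_n^{-(m_n-im)}(w_n)$ lie in narrow cones of comparable length, so a rotation budget distributed equally over blocks of length $m$ is provably enough. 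Formalizing this budget and turning a near-tangency into an exact eigenvalue on the unit circle is the technically delicate step, and is the core content of the cited papers of Liao and Ma\~n\'e.
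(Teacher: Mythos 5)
The paper does not prove Theorem~\ref{PLC}; it states it as a result of Liao \cite{Liao1} and Ma\~n\'e \cite{Man} (the citations appear in the theorem header) and uses it as a black box, so there is no proof in the paper to compare against. On its own terms your outline captures the governing strategy of those sources: use the star assumption to rule out nearby cocycles having non-hyperbolic periodic orbits, and perturb to produce such an orbit whenever a uniform estimate fails along a sequence of periodic orbits of cocycles $B_n\to A$.

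Two points in your outline warrant care. First, in (1) one must turn the cumulative small rotation into an honest eigenvalue of modulus one; the standard device is to run a continuous path of cocycles from $B_n$ to the rotated $\tilde B_n$, note that the index of $q_n$ changes along the path, and select an intermediate parameter at which $q_n$ has a unit-modulus eigenvalue. You correctly flag this as the technical heart and defer it to the sources. Second, for (2) the perturbation that feeds the star assumption is not a second rotation but a fiberwise homothety of $B$ in the $E^s$-direction along ${\rm Orb}(q)$, which, together with the domination already established in (1), slides the stable rate through zero. And once the star assumption forces the period-average of $\log\|B^m|_{E^s}\|$ below a uniform negative constant, the inequality in (2) follows by exponentiation alone, with $C$ absorbing the $<m$ leftover iterates via the bound on $\|B\|$: Pliss lemma is not part of this deduction. (In this paper, Pliss enters later, in Section~3, to pass from period-averaged control on periodic orbits back to pointwise control, which is a different step.)
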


The two inequalities in Item 2 are usually referred to as
``uniformly contracting (expanding) at the periods" for periodic
orbits. We remark that Liao and  Ma\~n\'e did not use the term of
linear co-cycles. Liao worked  (for flows) on tangent bundles of
manifolds, and  Ma\~n\'e worked on
 periodic sequences of linear isomorphisms.

Via Franks' lemma \cite{Fra}, Theorem \ref{PLC} applies to the
manifold $M$ and ensures a dominated splitting for certain set of
periodic orbits of $f$. The classical application is the one in the
proof of the stability conjecture by Ma\~n\'e \cite{Man2}. We do not
state the Franks lemma as we will need a refined Franks lemma that,
briefly, preserves intersections of stable and unstable manifolds,
because we have to always stay inside the chain class. This is the
result of Gourmelon \cite{Gou}. We take a simple form of his result
that is enough to our purpose:

\begin{Proposition}\label{Gourmelon} (\cite{Gou}) \ Let $f$ be a diffeomorphism of $M$.
For any $C^1$ neighborhood ${\mathcal U}$ of $f$, there is
$\epsilon>0$ such that, for any pair of hyperbolic periodic points
$p, q\in M$ of $f$ that are homoclinically related, any neighborhood
$U$ of  ${\rm Orb}(q)$ in $M$ not touching ${\rm Orb}(p)$, and any
continuous path of linear isomorphisms $A_{k, t}: T_{f^k q}M\to
T_{f^{k+1} q}M$ that satisfies the following three assumptions:

$(1)$  $A_{k, 0}=D_{f^kq}f$ for all $0\leq k<\pi(q)$,

$(2)$ $\|A_{k, t}- D_{f^k(q)}f\|<\epsilon$ for all $0\leq k<\pi(q)$
and any $t\in[0, 1]$,

$(3)$ $A_{\pi(q)-1, t}\circ A_{\pi(q)-2, t}\circ\cdots\circ A_{0,
t}$ has no eigenvalue on the unit circle for all $t\in[0, 1]$,

there exist a perturbation $g\in\mathcal{U}$ with the following
three properties:

$(A)$  $g=f$ on  $(M\backslash U)\cup{\rm Orb}(q)$,

$(B)$  $D_{f^kq}g=A_{k,1}$ for all $0\leq k<\pi(q)$,

$(C)$  $p$ and $q$ are homoclinically related with respect to $g$.

\end{Proposition}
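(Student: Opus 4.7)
The plan is to realize the target derivatives $A_{k,1}$ not by a single jump but through a continuous deformation of $f$ parametrized by $t\in[0,1]$, and to transport the transverse homoclinic intersections of $f$ to $g:=g_1$ by persistence along this deformation. Using the classical Franks bump-function construction, build a continuous family $\{g_t\}_{t\in[0,1]}\subset{\rm Diff}(M)$ with $g_0=f$, $g_t\equiv f$ on $(M\setminus U)\cup{\rm Orb}(q)$, and $D_{f^kq}g_t=A_{k,t}$: inside disjoint balls about each $f^kq$ contained in $U$, interpolate between $D_{f^kq}f$ and $A_{k,t}$ via a bump function supported in $U$. Taking $\epsilon$ small compared to the $C^1$-size of $\mathcal{U}$ and the modulus of continuity of $Df$, assumption (2) guarantees $g_t\in\mathcal{U}$ for every $t$, which already yields (A) and (B) for $g:=g_1$.

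Next, because $g_t$ fixes ${\rm Orb}(q)$ setwise and the period-product of its derivative has no eigenvalue on the unit circle (assumption (3)), ${\rm Orb}(q)$ remains a hyperbolic periodic orbit of $g_t$ with constant stable and unstable indices throughout the homotopy. The local invariant manifolds $W^s_{\rm loc}({\rm Orb}(q),g_t)$ and $W^u_{\rm loc}({\rm Orb}(q),g_t)$ therefore vary $C^1$-continuously in $t$, while the invariant manifolds of ${\rm Orb}(p)$ are unchanged near ${\rm Orb}(p)$ since $U\cap{\rm Orb}(p)=\emptyset$.

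Now transport the transverse heteroclinic points. Pick $x_0\in W^s({\rm Orb}(p),f)\pitchfork W^u({\rm Orb}(q),f)$ together with an integer $n_0\geq 0$ satisfying $f^{-n_0}(x_0)\in W^u_{\rm loc}({\rm Orb}(q),f)$; for each $t$ set $D_t:=g_t^{n_0}(W^u_{\rm loc}({\rm Orb}(q),g_t))$, which is a compact $C^1$-disc depending $C^1$-continuously on $t$. Fix a small neighborhood $V$ of $x_0$ and let $T$ be the set of $t\in[0,1]$ for which $D_t$ meets $W^s({\rm Orb}(p),f)$ transversely inside $V$. Then $0\in T$, and the implicit function theorem gives openness of $T$ together with a continuous selection $t\mapsto x_t$ of intersection points. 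A symmetric argument treats $W^u({\rm Orb}(p),f)\cap W^s({\rm Orb}(q),g_t)$, so property (C) follows once $T=[0,1]$. A practical way to force this is to partition $[0,1]$ into short subintervals on which the classical Franks lemma and the $C^1$-openness of transverse intersections propagate the property in one step.

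The main obstacle is closedness of $T$: one must prevent a homoclinic tangency from forming between $D_t$ and $W^s({\rm Orb}(p),f)$ at some intermediate $t$, which would collapse the transverse intersection and break the continuation of $x_t$. Hyperbolicity of ${\rm Orb}(q)$ along the entire path (assumption (3)) controls the local splitting and hence the local manifolds, but the compact global iterate $D_t=g_t^{n_0}(W^u_{\rm loc})$ can in principle deform freely away from ${\rm Orb}(q)$; producing a uniform lower bound on the angle of intersection, independent of $t$, in terms of $p$, $q$, $f$ and $\mathcal{U}$ only, is the technical heart of Gourmelon's refinement of Franks' lemma.
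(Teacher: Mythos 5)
The statement is a quotation of Gourmelon's theorem (\cite{Gou}); the paper cites it and offers no proof, so there is no ``paper's proof'' to compare against.

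Your outline does capture the natural homotopy/continuation strategy: interpolating from $D_{f^kq}f$ to $A_{k,1}$ through the given path, realizing each $A_{k,t}$ by a Franks-type bump-function perturbation supported in small disjoint balls inside $U$ around the points of ${\rm Orb}(q)$, and then arguing that transverse intersections with $W^{s/u}({\rm Orb}(p))$ persist along the deformation. The construction of $g_t$ with properties (A) and (B), and the observation that assumption (3) keeps ${\rm Orb}(q)$ hyperbolic of constant index throughout, are correct and standard. The openness of your set $T$ via the implicit function theorem is also fine.

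But you have not proved (C), and you say so yourself: closedness of $T$, i.e.\ a $t$-uniform lower bound on the transversality angle, is the entire content of Gourmelon's theorem, and you leave it unaddressed. The suggested remedy --- ``partition $[0,1]$ into short subintervals and propagate transversality one step at a time'' --- does not close the gap, because the classical Franks lemma gives no lower bound on how much transversality survives one step; the allowed increment per step can be proportional to the current (shrinking) angle, so the angle may collapse after finitely many steps. There is a second unaddressed issue in the same spirit: the iterated disc $D_t=g_t^{n_0}\bigl(W^u_{\rm loc}({\rm Orb}(q),g_t)\bigr)$ may re-enter the support $U$ of the perturbation many times before it reaches a neighborhood of the heteroclinic point, and each return picks up a deflection; your assertion that $D_t$ depends $C^1$-continuously on $t$ with controlled displacement is exactly what needs to be quantified. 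Gourmelon's actual argument does not track a single intersection point; instead he uses the hyperbolicity granted by (3) together with cone-field estimates to show that a \emph{fixed-scale} piece of $W^{s}({\rm Orb}(q),g_t)$ and $W^{u}({\rm Orb}(q),g_t)$ survives the whole path and moves by an amount controlled by $\epsilon$ and the hyperbolicity constants of ${\rm Orb}(q)$, independently of $t$; since $W^{u/s}({\rm Orb}(p))$ does not move at all, the transverse intersections persist. Without that uniform invariant-manifold estimate, your proposal is a plausible outline with a genuine gap precisely where the theorem is hard.
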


\begin{Proposition}\label{Proposition23} Let
$f\in\mathcal{R}$, and let $p\in M$ be a hyperbolic periodic point
of $f$. If $C_f(p)$ is structurally stable then there are three
constants $m>0$, $C>0$  and $0<\lambda<1$ such that, for any
periodic point $q$ of $f$ that is homoclinically related to $p$, the
following conditions are satisfied:

$(1)$  $\|Df^m|_{E^s(q)}\|\cdot\|Df^{-m}|_{E^u(f^mq)}\|<\lambda$.

$(2)$  Let $k=[\pi(q)/m]$, then
$$\prod_{i=0}^{k-1}\|Df^m|_{E^s(f^{im}(q))}\|<C\lambda^k,$$
$$\prod_{i=0}^{k-1}\|Df^{-m}|_{E^u(f^{-im}(q))}\|<C\lambda^k.$$
\end{Proposition}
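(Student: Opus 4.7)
The plan is to deduce the proposition immediately from Theorem \ref{PLC} applied to the derivative cocycle $A = Df$ over the $f$-invariant set $\Lambda = \bigcup \operatorname{Orb}(q)$, the union ranging over all periodic points $q$ of $f$ homoclinically related to $p$. Every point of $\Lambda$ is periodic, and $A$ is bounded since $M$ is compact, so $A$ is a bounded periodic linear cocycle in the sense of Theorem \ref{PLC}. Once $A$ is shown to be a \emph{star system}, the constants $m$, $C$, $\lambda$ and the inequalities $(1)$--$(2)$ of the proposition are precisely the output of Theorem \ref{PLC}.

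The whole proof therefore reduces to the star-system verification, which I would carry out by combining Propositions \ref{Proposition22} and \ref{Gourmelon}. Let $\lambda_0 < 1$ and $\mathcal{U}$ be given by Proposition \ref{Proposition22}, and let $\epsilon_0$ be the constant that Proposition \ref{Gourmelon} assigns to $\mathcal{U}$. I claim $\epsilon := \epsilon_0/2$ works. Suppose for contradiction that some cocycle $B$ with $d(B, A) < \epsilon$ has a periodic orbit $\operatorname{Orb}(q) \subset \Lambda$ on which $B^{\pi(q)}$ has an eigenvalue $\mu$ of modulus one. By continuity, $\mu$ is a perturbation of some eigenvalue $\mu_0$ of $A^{\pi(q)} = D_q f^{\pi(q)}$; by Proposition \ref{Proposition22} applied to $f$ itself, either $|\mu_0| < \lambda_0$ or $|\mu_0| > \lambda_0^{-1}$. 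A further cocycle perturbation of size less than $\epsilon_0/2$, pushing $\mu$ slightly to modulus $< 1$ in the first case and $> 1$ in the second, yields a hyperbolic cocycle $B'$ with $d(B', A) < \epsilon_0$ whose corresponding eigenvalue of $(B')^{\pi(q)}$ lies in $(\lambda_0, \lambda_0^{-1}) \setminus \{1\}$.

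By this careful choice, $A^{\pi(q)}$ and $(B')^{\pi(q)}$ share the same stable index and thus lie in the same connected component of the set of hyperbolic automorphisms of $T_q M$. Hence I can build a continuous path $\{A_{k,t}\}_{t \in [0,1]}$ of linear isomorphisms along $\operatorname{Orb}(q)$ from $Df$ to $B'$ satisfying conditions $(1)$--$(3)$ of Proposition \ref{Gourmelon}. Taking $U$ to be a small neighborhood of $\operatorname{Orb}(q)$ disjoint from $\operatorname{Orb}(p)$, Proposition \ref{Gourmelon} produces $g \in \mathcal{U}$ with $D_q g^{\pi(q)} = (B')^{\pi(q)}$ and with $p$ and $q$ homoclinically related with respect to $g$. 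Since $g = f$ on a neighborhood of $\operatorname{Orb}(p)$, the continuation satisfies $p_g = p$; thus $q$ is homoclinically related to $p_g$ under $g$, yet $D_q g^{\pi(q)}$ has an eigenvalue in $(\lambda_0, \lambda_0^{-1})$, contradicting Proposition \ref{Proposition22}. The main obstacle is securing condition $(3)$ of Proposition \ref{Gourmelon} — hyperbolicity of the entire path — which is precisely what forces the ``same-side'' perturbation of $\mu$ off the unit circle, so that the endpoints share a stable index and a hyperbolic interpolation between them exists.
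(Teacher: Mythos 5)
Your high-level plan — apply Theorem~\ref{PLC} to $Df$ over the union $\Lambda$ of periodic orbits homoclinically related to $p$, and reduce to verifying the star condition via Propositions~\ref{Proposition22} and~\ref{Gourmelon} — is exactly the paper's strategy. But your execution of the star-system verification has two real gaps, both concentrated in the step where you go from a nearby non-hyperbolic cocycle $B$ to a suitable path satisfying Gourmelon's hypotheses.

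First, you assert ``by continuity, $\mu$ is a perturbation of some eigenvalue $\mu_0$ of $A^{\pi(q)}$.'' This is false for a fixed $\epsilon$: the orbit $\operatorname{Orb}(q)$ has unbounded period as $q$ ranges over all periodic points homoclinically related to $p$, and a cocycle perturbation of fixed size $\epsilon$ at each of $\pi(q)$ steps can move the spectrum of the $\pi(q)$-fold product by a factor exponential in $\pi(q)$. So the eigenvalues of $B^{\pi(q)}$ need not be anywhere near those of $A^{\pi(q)}$, and the trichotomy you extract from Proposition~\ref{Proposition22} applied to $\mu_0$ is not available. Second, and relatedly, the claim that ``$A^{\pi(q)}$ and $(B')^{\pi(q)}$ share the same stable index'' is unsupported. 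After you push $\mu$ slightly off the unit circle, the stable index of $(B')^{\pi(q)}$ depends on the side you pushed and on the indices already accumulated by $B^{\pi(q)}$, which you have not compared to those of $A^{\pi(q)}$. Without equality of stable indices, the connected-component argument collapses; and even granted equal indices, a path in the space of hyperbolic automorphisms of $T_qM$ does not by itself yield a path of cocycles $\{A_{k,t}\}$ along the orbit that stays $\epsilon_0$-close to $Df$ fiberwise — condition $(2)$ of Proposition~\ref{Gourmelon} is a constraint at every $k$, not on the product.

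The paper sidesteps both issues with a cleaner device: join $Df$ to $A$ by a path $A_t$ (e.g., linear interpolation on each fiber) and let $s$ be the \emph{first} parameter at which $\operatorname{Orb}(q)$ becomes non-hyperbolic. For $t<s$ hyperbolicity persists, so the stable index stays pinned at $\operatorname{Ind}(q)$; continuity of the eigenvalues of $A_t^{\pi(q)}$ in $t$ then lets one pick $s'<s$ where some eigenvalue already lies in $(\lambda,\lambda^{-1})$. The restricted path $A_t$, $t\in[0,s']$, automatically satisfies Gourmelon's conditions $(1)$--$(3)$, since condition (2) holds by construction (the endpoint $A$ was taken arbitrarily close to $Df$) and condition (3) holds because no parameter in $[0,s']$ is non-hyperbolic. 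You should replace your perturb-and-reconnect step with this first-non-hyperbolic-time argument.
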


\begin{proof}
Let ${\mathcal U}$ and $0<\lambda<1$ be given in Proposition
\ref{Proposition22}. For this ${\mathcal U}$, let $\epsilon>0$ be
the number given in Proposition \ref{Gourmelon}.

Let $\Lambda$ be the union of periodic orbits of $C_f(p)$. By
Proposition \ref{Proposition1}, every $q\in \Lambda$ is
homoclinically related to $p$. The tangent map $Df:T_\Lambda M\to
T_\Lambda M$ acts as a periodic linear co-cycle over $f$. We verify
that it is a star system in the sense of linear co-cycles.

Suppose for the contrary there is a linear co-cycle $A:T_\Lambda
M\to T_\Lambda M$ arbitrarily close to $Df$ that has a periodic
orbit ${\rm Orb}(q)$ of $f$ which is non-hyperbolic with respect to
$A$. We join $A$ with $Df$ by a path $A_t$ with $A_0=Df$ and
$A_1=A$. Since $A$ can be arbitrarily close to $Df$, we may assume
$A_t|_{{\rm Orb}(q)}$ satisfies assumption (2) of Proposition
\ref{Gourmelon}, for every $t\in [0, 1]$. Let $s\in (0, 1]$ be the
first parameter that makes $q$ non-hyperbolic, namely, $q$ is
non-hyperbolic with respect to $A_s$, but is hyperbolic with respect
to $A_t$, for every $t\in [0, s)$. Take $s'$ slightly less than $s$
so that one of the eigenvalues $\mu$ of $A_{\pi(q)-1, s'}\circ
A_{\pi(q)-2, s'}\circ\cdots\circ A_{0, s'}$ (in absolute value) is
within $(\lambda, \lambda^{-1})$. Then the path $A_t$, $t\in [0,
s']$, satisfies the three assumptions of Proposition
\ref{Gourmelon}, hence there is $g\in {\mathcal U}$  that preserves
${\rm Orb}(q)$ and ${\rm Orb}(p)$ and realizes $A_{s'}|_{{\rm
Orb}(q)}$ to be $Dg|_{{\rm Orb}(q)}$, such that $p$ and $q$ are
homoclinically related with respect to $g$. Such a weak eigenvalue
$\mu$ contradicts Proposition \ref{Proposition22}. This verifies
that $Df:T_\Lambda M\to T_\Lambda M$ is a star periodic linear
co-cycle over $f$. Thus Proposition \ref{Proposition23} follows from
Theorem \ref{PLC}.
\end{proof}

\section{Minimally non-contracting sets}
The following  result is well known as Pliss lemma.
\begin{Proposition}\label{Pliss} {\rm (Pliss)} {\rm \cite{Pli}}
Let $K>0$ and $\gamma_1<\gamma_2$ be given. There is $c>0$ such that
for any sequence of real numbers  $a_0,..., a_{n-1}$  with $|a_i|\le
K$, if
$$\frac{1}{n}\sum_{i=0}^{n-1}a_i<\gamma_1,$$ then there are
$0\leq n_1<...<n_j\leq n-1$ such that
$$\frac{1}{k}\sum_{i=n_m}^{n_m+k-1}a_i<\gamma_2$$ for all $1\le m\le j$ and $1\leq
k\leq n-n_m$. Furthermore, $j\ge cn$.
\end{Proposition}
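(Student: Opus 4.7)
The plan is to reduce the statement to a combinatorial claim about the partial sums of the shifted sequence $b_i := a_i - \gamma_2$. Writing $S_k = \sum_{i=0}^{k-1} b_i$ with $S_0 = 0$, the desired inequality $\tfrac{1}{k}\sum_{i=n_m}^{n_m+k-1} a_i < \gamma_2$ is equivalent to $S_{n_m + k} < S_{n_m}$. Hence I seek indices $0 \le n_m \le n-1$ such that $S_j < S_{n_m}$ for every $j$ with $n_m < j \le n$. Call such indices \emph{good} and the remaining indices in $\{0, \dots, n-1\}$ \emph{bad}; the goal is to show that at least $cn$ indices are good, for some $c$ depending only on $K$, $\gamma_1$, $\gamma_2$.

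The key step is a greedy covering of the bad indices by disjoint intervals on each of which the sum of $b_i$'s is non-negative. Let $i_1$ be the smallest bad index and set $j_1 = \min\{k : i_1 < k \le n,\ S_k \ge S_{i_1}\}$, which exists by the definition of bad. Every $m \in [i_1, j_1 - 1]$ is then itself bad: by the minimality of $j_1$ we have $S_m < S_{i_1} \le S_{j_1}$ while $m < j_1 \le n$, so $m$ fails the good condition. Moreover $\sum_{i=i_1}^{j_1 - 1} b_i = S_{j_1} - S_{i_1} \ge 0$. Now take $i_2$ to be the smallest bad index $\ge j_1$ (if any) and iterate. The resulting intervals $[i_1, j_1 - 1], [i_2, j_2 - 1], \dots$ are disjoint, each carries a non-negative sum of $b_i$'s, and together they exhaust the bad indices, since any index strictly between $j_k$ and $i_{k+1}$ is good by the choice of $i_{k+1}$.

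With the covering in hand, a simple averaging finishes the argument. The hypothesis gives
\[\sum_{i=0}^{n-1} b_i = \sum_{i=0}^{n-1} a_i - n\gamma_2 < n(\gamma_1 - \gamma_2).\]
The sum over bad indices is $\ge 0$ by the covering, so the sum over good indices is strictly less than $n(\gamma_1 - \gamma_2)$. Since $|b_i| \le K + |\gamma_2|$, if $j$ denotes the number of good indices then the sum over them is at least $-j(K + |\gamma_2|)$. Combining the two bounds yields $j(K + |\gamma_2|) > n(\gamma_2 - \gamma_1)$, so the proposition holds with $c = (\gamma_2 - \gamma_1)/(K + |\gamma_2|)$.

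The argument is classical and I do not expect a substantial obstacle. The only subtle point is the claim that every index inside $[i_k, j_k - 1]$ is bad, which relies essentially on choosing $j_k$ as the \emph{first} time the partial sum recovers to the level $S_{i_k}$.
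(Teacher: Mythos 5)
The paper does not supply a proof of Pliss's lemma; it merely states it and cites \cite{Pli}, so there is no in-paper argument to compare against. Your proof is correct and is essentially the standard proof of the lemma. The translation to partial sums of $b_i = a_i - \gamma_2$ correctly converts the desired averaged inequalities to $S_{n_m+k} < S_{n_m}$; the greedy construction of disjoint intervals $[i_k, j_k-1]$ with $S_{j_k} - S_{i_k} \ge 0$ that cover exactly the bad indices is the key combinatorial device (sometimes phrased as a ``rising sun'' or ``maximal interval'' argument); and the final averaging step with the bound $|b_i|\le K+|\gamma_2|$ yields $j > cn$ with $c = (\gamma_2-\gamma_1)/(K+|\gamma_2|)$, which gives the claimed $j \ge cn$ for a constant $c>0$ depending only on $K,\gamma_1,\gamma_2$. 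The one point you flag as subtle — that minimality of $j_k$ forces every index in $[i_k,j_k-1]$ to be bad — is handled correctly: for $i_k < m < j_k$ minimality gives $S_m < S_{i_k} \le S_{j_k}$, and $m<j_k\le n$ then witnesses that $m$ is bad.
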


Briefly, Pliss lemma says that if a finite sequence $a_0, ...,a_n$
has total (from $0$ to $n$) average
 less than $\gamma_1$, then there are proportionally many intermediate times $n_m$
 such that the averages from $n_m$ to all its successors $n_m+k$ are less than $\gamma_2$.
This elementary lemma will be frequently used below.

Let $\Lambda\subset M$ be a compact invariant set of $f$ and $E$ be
a continuous subbundle of $T_\Lambda M$, invariant under $Df$. For
$x\in \Lambda$, denote $$\phi(x)=\log\|Df|_{E(x)}\|.$$ Thus $\phi$
is a real function on $\Lambda$ about exponential rates of $Df$ on
$E$ under positive iterates. Here is a corollary of Pliss lemma:

\begin{lemma}
$(1)$ If there is $x\in \Lambda$ with
$\liminf_{n\to+\infty}\frac{1}{n}\sum_{i=0}^{n-1}\phi(f^{i}x)<0$,
then there is $y\in \Lambda$ with
$\limsup_{n\to+\infty}\frac{1}{n}\sum_{i=0}^{n-1}\phi(f^{i}y)<0$.

 $(2)$ If there is $x\in \Lambda$ with
$\limsup_{n\to+\infty}\frac{1}{n}\sum_{i=0}^{n-1}\phi(f^{i}x)>0$,
then there is $y\in \Lambda$ with
$\liminf_{n\to+\infty}\frac{1}{n}\sum_{i=0}^{n-1}\phi(f^{i}y)>0$.
\end{lemma}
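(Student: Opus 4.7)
The plan is to apply Pliss lemma (Proposition \ref{Pliss}) together with a compactness argument. I will focus on part (1); part (2) follows by applying the same strategy to $-\phi$.

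For (1), since $\phi$ is continuous on the compact set $\Lambda$, it is bounded by some $K>0$. Pick $\gamma_1<0$ with $\liminf_n\frac{1}{n}\sum_{i=0}^{n-1}\phi(f^ix)<\gamma_1$ and choose $\gamma_2$ with $\gamma_1<\gamma_2<0$. There is a sequence $n_k\to\infty$ with $\frac{1}{n_k}\sum_{i=0}^{n_k-1}\phi(f^ix)<\gamma_1$. Applying Proposition \ref{Pliss} (with the constants $K,\gamma_1,\gamma_2$ fixed once and for all, yielding a fixed $c>0$) to the sequence $a_i=\phi(f^ix)$, $i=0,\dots,n_k-1$, I obtain a subset $S_k\subset\{0,\dots,n_k-1\}$ of cardinality $\geq cn_k$ such that, for each $m\in S_k$ and each $1\leq \ell\leq n_k-m$, $\frac{1}{\ell}\sum_{i=m}^{m+\ell-1}\phi(f^ix)<\gamma_2$.

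Next I take $m_k=\min S_k$. Since $S_k$ contains at least $cn_k$ distinct elements of $\{0,\dots,n_k-1\}$, one has $m_k\leq (1-c)n_k$, so the effective forward range $L_k:=n_k-m_k\geq cn_k\to\infty$. Set $y_k=f^{m_k}x\in\Lambda$; then $\frac{1}{\ell}\sum_{i=0}^{\ell-1}\phi(f^iy_k)<\gamma_2$ for every $1\leq \ell\leq L_k$. By compactness of $\Lambda$, pass to a subsequence with $y_k\to y\in\Lambda$. Continuity of $\phi$ implies that for each fixed $\ell$ the average $\frac{1}{\ell}\sum_{i=0}^{\ell-1}\phi(f^iy_k)$ converges to $\frac{1}{\ell}\sum_{i=0}^{\ell-1}\phi(f^iy)$, and since $L_k\to\infty$ the inequality applies for every fixed $\ell$. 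I conclude $\frac{1}{\ell}\sum_{i=0}^{\ell-1}\phi(f^iy)\leq\gamma_2<0$ for all $\ell\geq 1$, which in particular gives $\limsup_\ell \frac{1}{\ell}\sum_{i=0}^{\ell-1}\phi(f^iy)\leq \gamma_2<0$, proving (1).

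For (2), the same scheme applied to $-\phi(f^ix)$ yields Pliss times $m_k$ such that, setting $y_k=f^{m_k}x$, the forward averages of $\phi$ from $y_k$ are bounded \emph{below} by some $\gamma_2>0$ up to length $L_k\to\infty$; a subsequential limit $y\in\Lambda$ then satisfies $\frac{1}{\ell}\sum_{i=0}^{\ell-1}\phi(f^iy)\geq \gamma_2>0$ for every $\ell\geq 1$, so $\liminf_\ell \frac{1}{\ell}\sum_{i=0}^{\ell-1}\phi(f^iy)\geq \gamma_2>0$. The main subtlety is that Pliss's lemma supplies \emph{proportionally many} good intermediate times; this proportion is precisely what lets me pick the smallest Pliss time $m_k$ and still have $L_k=n_k-m_k$ grow linearly in $n_k$, so that the resulting uniform bound survives the passage to the limit for \emph{every} $\ell$ rather than only infinitely often. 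The continuity of $\phi$, which follows from continuity of the invariant subbundle $E$, supplies the other essential ingredient.
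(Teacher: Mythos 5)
Your proof is correct and follows essentially the same strategy as the paper's: use Pliss's lemma to produce Pliss times for the finite orbit segments, exploit the linear density of Pliss times to ensure the useful forward range $L_k = n_k - m_k \to \infty$, pass to a subsequential limit $y = \lim f^{m_k}x \in \Lambda$, and use continuity of $\phi$ to transfer the uniform forward averages to $y$. The only cosmetic difference is that the paper works with $s = \liminf$ directly and a small $\varepsilon$, whereas you introduce auxiliary thresholds $\gamma_1 < \gamma_2 < 0$; both routes yield the required negative bound on $\limsup$.
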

\begin{proof}
If there is $x\in \Lambda$ such that
$$s=\liminf_{n\to+\infty}\frac{1}{n}\sum_{i=0}^{n-1}\phi(f^{i}x)<0,$$
then there are positive integers $n_j\to +\infty$ such that
$$\sum_{i=0}^{n_j-1}\phi(f^{i}x)<n_j(s+\varepsilon)$$ for a
small $\varepsilon\in(0, -s/2)$. By Pliss lemma, there is $m_j$ for
every $j$ such that $n_j-m_j\to+\infty$ and
$$\frac{1}{k}\sum_{i=m_j}^{m_j+k-1}\phi(f^{i}x)<s+2\varepsilon$$
for any $k=1,\cdots,n_j-m_j$. By taking a subsequence, we may assume
$f^{m_j}x\to y\in \Lambda$. One can verify that
$$\limsup_{n\to+\infty}\frac{1}{n}\sum_{i=0}^{n-1}\phi(f^{i}y)\leq s+2\varepsilon<0.$$
Item (2) can be proven similarly. This proves Lemma 3.1.
\end{proof}

We also need the following result known as Liao's selecting lemma,
see \cite{Liao}. There is an exhibition for this lemma in
\cite{Wen}.
\begin{Proposition}\label{Liao} {\rm (Liao)}
 Let $\Lambda$ be a compact invariant set of $f$ with $(m,
\lambda)$-dominated splitting $E\oplus F$ with ${\rm dim}(E)=I$,
$1\le I\le d-1$. Assume

$(1)$ There is a point $b\in\Lambda$ satisfying
$$\prod_{i=0}^{n-1}\|Df^m|_{E(f^{im}b)}\|\ge 1$$
for all $n\ge 1.$

$(2)$  \ There are $\lambda_1$ and $\lambda_2$ with
$\lambda<\lambda_1<\lambda_2<1$ such that for any $x\in\Lambda$
satisfying
$$\prod_{i=0}^{n-1}\|Df^m|_{E(f^{im}x)}\|\ge \lambda_2^n$$
for all $n\ge 1,$  $\omega(x)$ contains a point $c\in\Lambda$
satisfying
$$\prod_{i=0}^{n-1}\|Df^m|_{E(f^{im}c)}\|\le \lambda_1^{n}$$
for all $n\ge 1$.

Then for any $\lambda_3$ and $\lambda_4$ with
$\lambda_2<\lambda_3<\lambda_4<1$, there is a sequence of hyperbolic
periodic point $q_n$ of $f$ of index $I$ such that

$(A)$ ${\rm Orb}(q_n)$ converge to a subset of $\Lambda$ in the
Hausdorff metric;

$(B)$  ${\rm Orb}(q_n)$ are mutually homoclinically related;

$(C)$ the periods $\pi(q_n)$ are multiples of $m$ such that
$$\prod_{i=0}^{k-1}\|Df^m|_{E^s(f^{im}q_n)}\|\le \lambda_4^{k},$$
$$\prod_{i=k-1}^{\pi(q_n)/m-1}\|Df^m|_{E^s(f^{im}q_n)}\|\ge
\lambda_3^{\pi(q_n)/m-k+1},$$ for all $k=1,\cdots,\pi(q_n)/m$. Here
$E^s$ denotes the stable subbundle of ${\rm Orb}(q_n)$.

 Similar assertions for $F$ hold respecting
$f^{-1}$.
\end{Proposition}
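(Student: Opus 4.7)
The plan is to follow Liao's original strategy: construct ``quasi-periodic'' orbit segments in $\Lambda$ whose forward Birkhoff averages along $E$ have controlled exponential decay, then close them up into genuine hyperbolic periodic orbits using the shadowing property afforded by the dominated splitting $E\oplus F$. I describe the steps below.

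First, I would produce a fertile source of orbit segments with the right contraction rates. Start with the non-contracting point $b$ from hypothesis (1) and iterate forward. Any forward limit point $x^\ast$ of the orbit of $b$ still satisfies $\prod_{i=0}^{n-1}\|Df^m|_{E(f^{im}x^\ast)}\|\geq\lambda_2^n$ for all $n$ (the inequality survives limits because $\lambda_2<1$ absorbs a bounded prefactor), so hypothesis (2) applies: $\omega(x^\ast)$ contains a point $c$ whose forward products along $E$ decay at rate at most $\lambda_1$. Thus we get an orbit segment that begins in a neighborhood of a ``borderline'' point ($x^\ast$, non-contracting at rate $\lambda_2$), travels through the orbit of $b$-type points, and eventually visits a neighborhood of the strongly contracting point $c$. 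Picking a point just before such a visit and applying Pliss's lemma (Proposition \ref{Pliss}) to the real sequence $\phi(f^{jm}(\cdot))=\log\|Df^m|_{E(f^{jm}(\cdot))}\|$, I extract proportionally many intermediate times at which the forward averages from that time onward are below $\log\lambda_3$. Iterating between the $\lambda_2$-borderline regime and the $\lambda_1$-strongly contracting regime produces, for each large $N$, an orbit segment $\{y, f^m y,\dots, f^{Nm}y\}\subset\Lambda$ whose start and end lie in an arbitrarily small neighborhood of one another in $\Lambda$, and along which
\[\prod_{i=0}^{k-1}\|Df^m|_{E(f^{im}y)}\|\le \lambda_4^{k},\qquad \prod_{i=k-1}^{N-1}\|Df^m|_{E(f^{im}y)}\|\ge \lambda_3^{N-k+1}\]
for every $1\le k\le N$. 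This two-sided estimate — uniformly contracting from the start but never too contracting from any later moment — is the heart of the ``selecting'' argument.

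Second, I would close each selected segment into a genuine periodic orbit. On the closure of the union of the selected segments the dominated splitting $E\oplus F$ persists, so Liao's shadowing lemma for dominated splittings applies: an almost-closed orbit segment of length $N m$ with the contraction estimates above can be shadowed by a true periodic orbit of $f$ of period a multiple of $m$, whose derivative cocycle satisfies the same estimates (up to changing $\lambda_3,\lambda_4$ slightly, which is why the statement leaves room in $(\lambda_2,\lambda_3,\lambda_4,1)$). The contraction on $E$ plus domination of $E$ by $F$ forces $E$ to be the stable bundle and $F$ the unstable bundle of this periodic orbit, and yields hyperbolicity with ${\rm Ind}=I$. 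This gives the sequence $q_n$ satisfying (C), and by the construction ${\rm Orb}(q_n)$ remains in arbitrarily small neighborhoods of subsets of $\Lambda$, giving (A) after passing to a Hausdorff-convergent subsequence.

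Finally, for (B) I would note that the uniform estimates in (C), together with the dominated splitting, give uniform sizes for the local stable and unstable manifolds of the $q_n$ (via the graph-transform/Hadamard–Perron argument for dominated splittings, as the estimates are robust enough to produce invariant manifolds of definite size). Because all the ${\rm Orb}(q_n)$ accumulate on the same compact piece of $\Lambda$ carrying $E\oplus F$, any two of them with sufficiently close orbits have their uniform-sized local stable and unstable manifolds intersecting transversally, forcing them to be homoclinically related. After passing once more to a subsequence, all $q_n$ are mutually homoclinically related. The symmetric conclusion for $F$ under $f^{-1}$ is obtained by applying the whole argument to the dual bundle under the inverse dynamics.

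The main obstacle, as usual with the selecting lemma, is the middle step: engineering orbit segments whose two-sided contraction profile is tight enough that (i) the shadowing lemma closes them into hyperbolic periodic orbits without losing the estimate, and (ii) the resulting periodic orbits inherit invariant manifolds of uniform size so they become homoclinically related. The flexibility provided by the slack $\lambda<\lambda_1<\lambda_2<\lambda_3<\lambda_4<1$, together with a careful double use of Pliss's lemma (forward to get the $\lambda_4$-bound, backward along the segment to get the $\lambda_3$-bound), is what makes the construction work.
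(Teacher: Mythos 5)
The paper does not actually prove the bulk of this proposition; it is quoted from Liao \cite{Liao}, with an exposition in \cite{Wen}. The only argument the paper itself supplies is a short derivation of conclusion (B) (which the authors added to the statement) from conclusion (C): by Pliss's lemma applied along each periodic orbit one finds a point $x_n\in{\rm Orb}(q_n)$ that is a Pliss point for $E^s$ (and, via the second inequality of (C) together with domination, for $E^u$ under $f^{-1}$), hence has local stable and unstable manifolds of uniform size; after passing to a subsequence the $x_n$ accumulate on a single point of $\Lambda$, so for large $n$ the orbits are mutually homoclinically related. Your treatment of (B) is the same argument in spirit, up to the choice of base point, so on the portion the paper actually proves you and the paper agree.

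For (A) and (C) your proposal is a plausible high-level outline of Liao's selecting argument — alternate between the borderline regime furnished by hypothesis (1) and the contracting regime furnished by hypothesis (2), extract quasi-hyperbolic orbit strings by two applications of Pliss, and close them with Liao's shadowing lemma for dominated splittings — but since the paper gives no proof of this part there is nothing to compare against, and your sketch leaves the real technical heart (the combinatorial selection of the strings and the shadowing/closing step) as black boxes. One concrete slip worth flagging: you assert that \emph{any} forward limit point $x^\ast$ of ${\rm Orb}(b)$ satisfies $\prod_{i=0}^{n-1}\|Df^m|_{E(f^{im}x^\ast)}\|\ge\lambda_2^n$, justified by the remark that ``$\lambda_2<1$ absorbs a bounded prefactor.'' This is false: the partial products $\prod_{i=0}^{n_k-1}\|Df^m|_{E(f^{im}b)}\|$ are bounded below by $1$ but may grow without bound, so the tail product $\prod_{i=n_k}^{n_k+n-1}$ can be arbitrarily small and the $\lambda_2$-bound does not pass to arbitrary limit points. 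One must first apply Pliss's lemma (to the sequence $-\log\|Df^m|_{E(f^{im}b)}\|$) along ${\rm Orb}(b)$ to obtain a positive-density set of times at which all forward products from that time on are bounded below by $\lambda_2^n$, and only then pass to a limit point along those Pliss times. You do invoke Pliss immediately afterwards, so the right tool is in your hands, but the stated justification is incorrect and the order of the argument needs to be repaired.
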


Thus, by taking $\lambda_3$ close to 1, the $E^s$-rates  at the
periods for ${\rm Orb}(q_n)$ could be arbitrarily weak. Note that
(B) was not included in the statement of the selecting lemma in
\cite{Wen}. For convenience of application we have added (B) here.
It is a consequence of (C). In fact, since ${\rm Orb}(q_n)$ is
periodic, applying Pliss lemma to this special case, one can find a
point $x_n\in {\rm Orb}(q_n)$ such that the $E^s$ rates of $x_n$
from $0$ to $\infty$ are all less than a slightly larger
$\lambda'_4$. This guarantees certain uniform size of
$W^s_{loc}(x_n)$. Likewise for $W^u_{loc}(x_n)$. Taking subsequences
we may assume $x_n$ converge to a point of $\Lambda$ hence, for $n$
large, $x_n$ are mutually homoclinically related. This gives (B).

Let $E$ be a continuous subbundle of $T_\Lambda M$.  As usual, $E$
is called {\it contracting} if there are $m\ge 1$ and $0<\lambda<1$
such that
$$\|Df^m|_{E(x)}\|\le \lambda$$
for any $x\in \Lambda$. In the spirit of Liao \cite{Liao} we call a
compact invariant set $K\subset \Lambda$ of $f$ {\it minimally
non-contracting of} $E$ if $E|_K$ is not contracting but $E|_{K'}$
is contracting for any compact invariant proper subset $K'\subset
K$. By Zorn's Lemma, every non-contracting set $\Lambda$ of $E$
contains a minimally non-contracting subset of $E$.

Let $f\in\mathcal{R}$, and let $C_f(p)$ be a structurally stable
chain class of $f$. By Proposition \ref{Generic}, $C_f(p)=H(p, f)$,
hence periodic points are dense in $C_f(p)$. By Proposition
\ref{Proposition1}, every periodic point in $C_f(p)$ is
homoclinically related to $p$.  Then the ($m, \lambda$)-dominated
splittings on these periodic orbits, obtained by Proposition
\ref{Proposition23}, extend to a dominated splitting
$$T_{C_f(p)}M=E\oplus F$$
on the whole set $C_f(p)$ with the same constants $m\ge 1$ and
$0<\lambda<1$. Note that $\dim E={\rm Ind}(p)$. Restricted to
periodic points $q\in C_f(p)$, one has $E(q)=E^s(q)$ and
$F(q)=E^u(q)$. We will work with this splitting throughout below and
eventually prove  it is hyperbolic, assuming ${\rm Ind}(p)$ is 1 or
$\dim M-1$.

Now assume ${\rm Ind}(p)=1$ and hence $\dim E=1$. The case ${\rm
Ind}(p)=\dim M-1$ can be treated similarly. We prove that, in this
case, any minimally non-contracting sets of $E$ must be partially
hyperbolic. Recall a dominated splitting $E\oplus F$ is called {\it
partially hyperbolic} if either $E$ is contracting, or $F$ is
expanding.

\begin{Proposition}\label{Proposition31}
Let $f\in\mathcal{R}$, and let $C_f(p)$ be a structurally stable
chain class of $f$. Let $T_{C_f(p)}M=E\oplus F$ be the dominated
splitting as above, and assume $\dim E=1$. Let $\Lambda\subset
C_f(p)$ be a minimally non-contracting set of $E$. Then $\Lambda$ is
partially hyperbolic. Indeed, writing $T_\Lambda M=E^c\oplus E^u$,
where $E^c=E|_\Lambda$ and $E^u=F|_\Lambda$, then $E^u$ is
expanding. Moreover,
$$\lim_{n\to+\infty}\frac{1}{n}\sum_{i=0}^{n-1}\log\|Df^m|_{E^c(f^{im}x)}\|=0$$
for all $x\in \Lambda$, where $m$ is the constant given in
Proposition \ref{Proposition23}.
\end{Proposition}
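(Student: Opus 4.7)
Set $\phi(x) = \log\|Df^m|_{E(x)}\|$ on $\Lambda$, with $m$ the constant from Proposition~\ref{Proposition23}. The plan is to establish (i) the ``moreover'' claim first, in two halves ($\limsup\leq 0$ and $\liminf\geq 0$ of the Birkhoff averages of $\phi$), and then to derive (ii) the uniform expansion of $F|_\Lambda$ from (i) via the domination inequality and a variational argument over invariant measures. The common scheme for (i) is to combine Pliss's lemma (Proposition~\ref{Pliss}) and the dichotomy lemma following Proposition~\ref{Pliss} with Liao's Selecting Lemma (Proposition~\ref{Liao}), producing periodic orbits near $\Lambda$ with weak hyperbolicity at the period; a $C^1$ connecting lemma together with Proposition~\ref{Gourmelon} will then bring these orbits inside the homoclinic class of $p_g$ for a generic perturbation $g$ close to $f$, where they contradict the uniform rates supplied by Proposition~\ref{Proposition23} (applied to $g$, whose constants remain uniform on the stability neighborhood since structural stability is a $C^1$-open condition).

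For the $\limsup\leq 0$ direction, I will assume some $x_0\in\Lambda$ has $\limsup > 0$. Item (2) of the dichotomy lemma gives $y_0\in\Lambda$ with $\liminf > 0$, and a dual Pliss argument produces a subsequential limit $b\in\Lambda$ with $\prod_{i=0}^{n-1}\|Df^m|_{E(f^{im}b)}\|\geq 1$ for every $n\geq 1$, verifying hypothesis (1) of Proposition~\ref{Liao}. Hypothesis (2) will be checked via the minimally non-contracting property: for any $x\in\Lambda$ whose forward $E$-rates stay above $\lambda_2^n$ with $\lambda_2$ slightly below $1$, the $\omega$-limit $\omega(x)$ must be a proper invariant subset (else a density argument combined with the weak non-contraction along a dense orbit contradicts the minimally non-contracting hypothesis via continuity), so $E|_{\omega(x)}$ is uniformly contracting and Pliss supplies a strongly contracting point inside. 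Applying Proposition~\ref{Liao} with $\lambda_3,\lambda_4$ arbitrarily close to $1$ produces index-$1$ periodic orbits $q_n$, mutually homoclinically related, Hausdorff-converging to a subset of $\Lambda$, whose $E^s$-rates at the period are only as strong as $\lambda_3^{\pi(q_n)/m}$. A $C^1$ connecting argument in the spirit of Proposition~\ref{Proposition1}, combined with Proposition~\ref{Gourmelon} applied along a path that keeps $q_n$ and $p$ homoclinically related, will then produce $g\in\mathcal{R}$ arbitrarily $C^1$-close to $f$ whose continuation of $q_n$ is homoclinically related to $p_g$. The $g$-version of Proposition~\ref{Proposition23} then forces $\prod\|Dg^m|_{E^s}\|\leq C\lambda^k$ along this continuation, which contradicts the weak rate $\lambda_3$ as soon as $\lambda_3$ is close enough to $1$.

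For the dual direction $\liminf\geq 0$, Pliss produces $c\in\Lambda$ with $\prod_{i=0}^{n-1}\|Df^m|_{E(f^{im}c)}\|\leq \lambda_0^n$ for all $n$, i.e., strong cumulative $E$-contraction along the forward orbit of $c$. I expect this to be the main obstacle, because the minimally non-contracting hypothesis alone does not forbid orbits whose $\omega$-limit is a proper invariant $E$-contracting subset. My plan is to split into cases. If $\omega(c)=\Lambda$, density of the forward orbit of $c$ together with the uniform contraction along it, combined with continuity of $\phi$ and Pliss, forces every invariant measure on $\Lambda$ to have non-positive $E$-Lyapunov exponent and the existence of at least one with strictly negative exponent; a standard Ma\~n\'e-type argument then pushes $E|_\Lambda$ to be uniformly contracting, contradicting non-contraction. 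If $\omega(c)\subsetneq\Lambda$, density of periodic orbits in $C_f(p)=H(p,f)$ (Proposition~\ref{Generic}, item~2) in a small neighborhood of $\omega(c)$ yields periodic orbits $r_n\in C_f(p)$ shadowing long initial segments of the orbit of $c$ with comparable $E$-contraction; since $r_n\in H(p,f)$ they are homoclinically related to $p$ by Proposition~\ref{Proposition1}, hence fall under Proposition~\ref{Proposition23}, which gives uniform contraction $\leq C\lambda^k$ at the period. A careful comparison between the strong contraction inherited from the shadowed segment and the bounded-length complement of the period then yields the contradiction.

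Finally for (ii), the pointwise Birkhoff limit $=0$ from (i) gives by the ergodic theorem $\int\phi\,d\mu=0$ for every $f$-invariant probability $\mu$ on $\Lambda$. The domination inequality $\log\|Df^m|_{E(x)}\|+\log\|Df^{-m}|_{F(f^m x)}\|<\log\lambda$ (with $\lambda<1$ the domination constant), integrated against $\mu$, yields $\int\log\|Df^{-m}|_F\|\,d\mu<\log\lambda<0$; equivalently, the minimum $F$-Lyapunov exponent on $\Lambda$ is uniformly bounded below by $-\log\lambda/m>0$ over all invariant measures. A standard Ma\~n\'e/Cao-type argument (uniform positivity of the minimum Lyapunov exponent over invariant measures on a compact invariant set, for a continuous invariant subbundle, implies uniform expansion) then yields $F|_\Lambda=E^u$ uniformly expanding, completing the proof.
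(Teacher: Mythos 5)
Your plan has the right high-level shape (handle the $\limsup\le 0$ and $\liminf\ge 0$ halves of the equality, then get expansion of $F$ by integrating the domination inequality), but both halves as sketched contain genuine gaps, and a third issue is that you introduce an unnecessary perturbation step that the paper avoids entirely.

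First, for $\limsup\le 0$: the paper does not go through Liao's Selecting Lemma at all. Assuming some $y$ has positive $\limsup$, it applies Pliss to obtain ``contracting times'' for $Df^{-m}|_E$; because $\dim E=1$ the $E$-rates translate directly into norm bounds, domination passes them to $F$, and (after switching to an adapted metric controlling the angle) to the full tangent space. A subsequential limit of these Pliss times then furnishes a \emph{periodic source} inside $C_f(p)$, which is impossible for a nontrivial chain class. Your route instead tries to feed this situation into Liao's lemma, but the verification of hypothesis (2) you propose rests on the claim that $\omega(x)$ ``must be a proper invariant subset'' for any $x$ with weak $E$-rates; that claim is unjustified (a minimally non-contracting set can be transitive while carrying such orbits), and indeed the paper's own later use of Liao's lemma explicitly handles the case $\omega(x)=\Lambda$ rather than ruling it out. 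So hypothesis (2) is not established in your sketch.

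Second, for $\liminf\ge 0$: in your case $\omega(c)=\Lambda$ you invoke a ``Ma\~n\'e/Cao-type argument'' from the hypotheses ``all invariant measures have non-positive $E$-exponent and at least one has strictly negative exponent'' to conclude uniform contraction. That implication is false: uniform contraction requires \emph{all} ergodic measures to have \emph{strictly} negative exponent; a minimally non-contracting set carries (and in general must carry) a measure with zero $E$-exponent. In your case $\omega(c)\subsetneq\Lambda$ the proposed shadowing periodic orbits would inherit \emph{strong} $E$-contraction, which is consistent with Proposition~\ref{Proposition23} and so yields no contradiction; the contradiction in the paper comes from producing periodic orbits with \emph{weak} $E^s$-rates near $1$ and with long periods. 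The paper achieves this by splitting on whether $\sup S=0$ (in which case it invokes Theorem~2 of \cite{WD} to produce such orbits) or $\sup S<0$ (in which case it verifies the two hypotheses of Liao's Selecting Lemma, using $\sup S<0$ precisely to handle $\omega(x)=\Lambda$). Your proposal has no analogue of this dichotomy.

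Third, the detour through Proposition~\ref{Gourmelon}, a $C^1$ connecting lemma, and ``$g$-versions'' of Proposition~\ref{Proposition23} with uniform constants over the stability neighborhood is not needed: the periodic orbits produced by Liao's lemma or by \cite{WD} already converge to a subset of $\Lambda\subset C_f(p)$ and are mutually homoclinically related, so for $f\in\mathcal R$ their homoclinic class equals $C_f(p)$, hence they are homoclinically related to $p$ \emph{for $f$ itself} by Proposition~\ref{Proposition1}. Proposition~\ref{Proposition23} then applies directly to $f$, with no perturbation and no claim of uniformity of constants over a neighborhood.

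Your closing derivation of uniform expansion of $F|_\Lambda$ from the zero limit via invariant measures and the domination inequality is correct and is in fact a reasonable elaboration of the paper's terse ``it directly implies.''
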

\begin{proof}
It suffices to  prove the limit equality only, as it directly
implies that $F|_\Lambda$ is expanding, by domination. We prove by
contradiction. Abbreviate
$$\phi(x)=\log\|Df^m|_{E(x)}\|$$ for $x\in \Lambda$. Here $E=E^c$. Suppose  there
is $y\in \Lambda$ such that
$$\limsup_{n\to+\infty}\frac{1}{n}\sum_{i=0}^{n-1}\phi(f^{im}y)>0.$$
By (a variant use of) Pliss lemma, there are $\lambda_1\in(0,1)$ and
a sequence of positive integers $n_1<n_2<\cdots$ such that
$$\frac{1}{k}\sum_{i=n_j-k}^{n_j-1}\phi(f^{mi}y)>-\log\lambda_1,$$
or, what is the same,
$$\prod_{i=n_j-k}^{n_j-1}\|Df^{m}|_{E(f^{mi}(y))}\|>\lambda_1^{-k},$$
 for any $j\geq 1$ and any $1\leq k\leq n_j$.
Taking inverse then gives
$$\prod_{i=n_j-k+1}^{n_j}\|Df^{-m}|_{E(f^{mi}(y))}\|<\lambda_1^{k},$$
 for any $j\geq 1$ and any $1\leq k\leq n_j$.  Note that this is the place where we use
 the assumption ${\rm dim}E=1$ (otherwise the inequality would be about mininorm instead of norm).
 Since $E\oplus F$ is a dominated splitting,
$$\prod_{i=n_j-k+1}^{n_j}\|Df^{-m}|_{F(f^{mi}(y))}\|<\lambda_1^{k},$$
 for any $j\geq 1$ and any $1\leq k\leq n_j$. Since the angles between
 $E(x)$ and $F(x)$ have a positive minimum for all $x\in C_f(p)$,
 switching to an equivalent norm if necessary, we may assume
$$\prod_{i=n_j-k+1}^{n_j}\|Df^{-m}(f^{mi}(y))\|<\lambda_1^{k},$$
 for any $j\geq 1$ and any $1\leq k\leq n_j$. Here $\|Df^{-m}(x)\|$
 denotes (as usual)  the norm of $Df^{-m}$ on the whole tangent space $T_x M$.
Briefly, $n_j$ are ``hyperbolic times" (or more precisely,
``contracting times") of $Df^{-m}$. Take a limit point $z$ of
$\{f^{n_jm}(y)\}_{j=1}^\infty$, it is standard to check that ${\rm
Orb}(z)$ is a periodic source of $f$. But $z\in \Lambda\subset
C_f(p)$, contradicting that any chain class can not contain a
periodic source unless the class reduces to this source. This proves
$$\limsup_{n\to+\infty}\frac{1}{n}\sum_{i=0}^{n-1}\phi(f^{im}x)\leq 0$$
for all $x\in \Lambda$.

Next suppose there is $y\in \Lambda$ such that
$$\liminf_{n\to+\infty}\frac{1}{n}\sum_{i=0}^{n-1}\phi(f^{im}y)<0.$$
By Lemma 3.1, the set $$S=\{s<0: \text{there is} ~ x\in \Lambda ~
\text{with} ~
\limsup_{n\to+\infty}\frac{1}{n}\sum_{i=0}^{n-1}\phi(f^{im}x)=s\}$$
is nonempty. There are two possibilities: $\sup S=0$ or $\sup S<0$.

If  $\sup S=0$, then there is $z\in \Lambda$ such that
$$\log\lambda<\limsup_{n\to+\infty}\frac{1}{n}\sum_{i=0}^{n-1}\phi(f^{im}z)<0.$$
Applying Theorem 2 of \cite{WD}, we obtain a hyperbolic periodic
point $q$ of $f$ such that $z\in H(q, f)$ and
$$\prod_{i=0}^{\pi(q)-1}\|Df^m|_{E^s(f^{im}(q))}\|>\lambda^{\pi(q)}.$$
Moreover, we have $\Lambda\cap H(q, f)\not=\emptyset$ and hence
$H(q, f)=C_f(p)$. Note that in the homoclinic classes, we can choose
a periodic point with arbitrarily large period such that the above
inequality is satisfied, this contradicts  Proposition
\ref{Proposition23}.

If  $\sup S<0$, we prove that $\Lambda$ satisfies the two
assumptions of Liao's selecting Lemma. Note that, since $E|_\Lambda$
is not contracting, there is a point $b\in \Lambda$ such that
$$\prod_{i=1}^{k-1}\|Df|_{E(f^{im}b)}\|\geq 1$$ for any $k\geq 1$. Thus the first assumption
of Liao's lemma is verified.

Now take $\xi_1$ and $\xi_2$ with
$$\max\{\lambda,e^{\sup S}\}<\xi_1<\xi_2<1.$$ To verify the second assumption of Liao's selecting lemma, let $x\in
\Lambda$ be a point with
$$\prod_{i=0}^{n-1}\|Df^m|_{E(f^{im}x)}\|\geq \xi_2^n$$ for all $n\ge
1.$ We verify that $\omega(x)$ contains a point $c$ with
$$\frac{1}{n}\sum_{i=0}^{n-1}\phi(f^{im}c)\leq \log\xi_1$$
for all $n\ge 1$. If $\omega(x)=\Lambda$,  there is of course a
point $y\in \omega(x)$ such that
$$\limsup_{n\to+\infty}\frac{1}{n}\sum_{i=0}^{n-1}\phi(f^{im}y)\leq\sup
S.$$  If $\omega(x)\not=\Lambda$, $E|_{\omega(x)}$ must be
contracting as $\Lambda$ is minimally non-contracting of $E$. Hence
$$\limsup_{n\to+\infty}\frac{1}{n}\sum_{i=0}^{n-1}\phi(f^{im}y)<0$$
for every $y\in \omega(x)$. Since $\sup S<0$, by the definition of
$S$,
$$\limsup_{n\to+\infty}\frac{1}{n}\sum_{i=0}^{n-1}\phi(f^{im}y)\leq\sup
S$$ for every $y\in\omega(x)$. Hence in both cases there is a point
$y\in\omega(x)$ such that
$$\limsup_{n\to+\infty}\frac{1}{n}\sum_{i=0}^{n-1}\phi(f^{im}y)\leq\sup
S.$$ Then, by  Pliss lemma, there is a point $c\in \omega(x)$ such
that
$$\frac{1}{n}\sum_{i=0}^{n-1}\phi(f^{im}c)\leq \log\xi_1$$
for all $n\ge 1$. This verifies the second assumption of Liao's
selecting lemma. Thus, by conclusion (C) of the lemma, there is  a
hyperbolic periodic point $q$ of $f$ such that
$$\prod_{i=0}^{\pi(q)-1}\|Df^m|_{E^s(f^{im}(q))}\|>\lambda^{\pi(q)}.$$
Moreover, by conclusion (A) and (B) of the lemma, we may assume
$\Lambda\cap H(q, f)\not=\emptyset$ and hence $H(q, f)=C_f(p)$. This
also contradicts Proposition \ref{Proposition23}, and proves
$$\liminf_{n\to+\infty}\frac{1}{n}\sum_{i=0}^{n-1}\phi(f^{im}(x))\geq
0$$ for all $x\in \Lambda$. Thus
$$\lim_{n\to+\infty}\frac{1}{n}\sum_{i=0}^{n-1}\log\|Df^m|_{E(f^{im}(x))}\|=0$$
for all $x\in \Lambda$. This proves Proposition \ref{Proposition31}.
\end{proof}

\section {A double  existence of periodic orbits}
The next result  asserts  a ``double" existence of a periodic orbit
near a minimal set $K$, i.e., the existence of a periodic orbit
which is, simultaneously, near $K$ and inside the chain class of
$K$.

\begin{theorem}\label{mainprop} Let $f\in\mathcal{R}$, and let $K$ be a
non-trivial minimal set with a partially hyperbolic splitting
$T_KM=E^c\oplus E^u$ such that $E^c$ is 1-dimensional and $E^u$ is
expanding. Then for any neighborhood $U$ of $K$ in $M$, there exists
a periodic orbit ${O}\subset U$ such that ${O}$ is in the chain
class of $K$.

\end{theorem}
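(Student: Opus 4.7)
The plan is to produce periodic orbits approaching $K$ in Hausdorff distance, argue each is a hyperbolic saddle of index $1$, and then certify membership in $C_f(K)$ using heteroclinic connections built from the uniform unstable manifolds of $K$.

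Since $K$ is minimal it is chain transitive, so item~5 of Proposition~\ref{Generic} provides periodic orbits $O_n$ of $f$ with $O_n\to K$ in the Hausdorff metric; in particular $O_n\subset U$ for $n$ large. The dominated splitting $E^c\oplus E^u$ extends by continuity to a compact neighborhood of $K$, and $E^u$ remains uniformly expanding there, so for $n$ large $O_n$ inherits a dominated splitting with $E^u$ expanding and $\dim E^c=1$. Since $f$ is Kupka-Smale (item~1), each $O_n$ is hyperbolic. I rule out that $O_n$ is a source along any subsequence: otherwise the $E^c$-eigenvalue of $O_n$ would lie outside the unit disk for infinitely many $n$, and a limit argument (passing normalized counting measures on $O_n$ to an $f$-invariant measure $\mu$ on $K$, and using continuity of $\log\|Df|_{E^c}\|$) would force $\mu$ to have positive $E^c$-Lyapunov exponent; combined with the uniform expansion of $E^u$, this would force every Oseledets direction along $\mu$ to be expanding, making $K$ a periodic source by minimality and contradicting non-triviality. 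Hence for $n$ large each $O_n$ is a hyperbolic saddle of index~$1$ with stable direction $E^c|_{O_n}$ and unstable direction $E^u|_{O_n}$.

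To place $O_n$ in $C_f(K)$, fix $q_n\in O_n$ and use the local unstable manifolds $W^u_r(x)$ of uniform size $r>0$ on $K$ (from the standard unstable manifold theorem) together with the matching $W^u_r(q_n)$ on $O_n$; also use the $1$-dimensional $W^s_r(q_n)$ tangent to $E^c$. By minimality, the forward orbit of a base point $x\in K$ is dense in $K$ and in particular returns arbitrarily close to $q_n$, producing transverse intersections $W^u({\rm Orb}(x))\pitchfork W^s(q_n)$ and hence heteroclinic points from $K$ to $q_n$. Conversely, $W^u(q_n)$ is a $(\dim M-1)$-dimensional submanifold; by minimality and the density of the unstable plaques of $K$, $W^u(q_n)$ must meet transversally some center plaque accumulating on $K$, producing a heteroclinic point from $q_n$ back to a point of $K$. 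Combining, $q_n$ and $x$ are chain equivalent, so $O_n\subset C_f(K)$.

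The principal technical obstacle is the second half of the last paragraph: producing the heteroclinic connection back from $q_n$ to $K$. The dimension count ($\dim W^u(q_n)+1=\dim M$) is favorable, but because target points in $K$ need not be hyperbolic one must replace genuine stable manifolds by center plaques from the Hirsch-Pugh-Shub plaque family theorem, and carefully verify a transverse crossing with $W^u(q_n)$. The $1$-dimensionality of $E^c$ is crucial here, since it makes the center plaques curves that can be threaded by the generic codimension-$1$ unstable manifold $W^u(q_n)$; without this, mere Hausdorff closeness of $O_n$ to $K$ is insufficient to force chain-class equality.
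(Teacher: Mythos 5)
Your argument corresponds, at best, only to the second of the two cases in the paper's proof, and it leaves the first case entirely unaddressed. The paper's proof hinges on a dichotomy in the style of Pujals--Sambarino: either for every $\delta>0$ there is a $\delta$-$E^c$ segment (a non-trivial one-sided central segment based at a point of $K$ whose backward iterates all have length $\le\delta$), or, for some $\delta>0$, there is none (``sensitive dependence on initial conditions'' for $f^{-1}$). Your proposal implicitly assumes the second alternative. In the first alternative the paper does not build heteroclinic connections at all; it runs a $\delta$-interval argument using Theorem 3.1 of \cite{PS3} and Crovisier's central-model considerations to locate a non-trivial central segment that lies in the chain class of $K$, and only then produces a periodic orbit near that segment via Item 5 of Proposition \ref{Generic}. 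Nothing in your sketch covers this situation, and it can occur.

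Even restricting to the sensitive-dependence case, there are two concrete gaps. First, the obstacle you flag yourself is not a transversality issue: a transverse intersection $W^u(q_n)\cap W^c_\gamma(y)$ with $y\in K$ is a heteroclinic point from $q_n$ to $K$ only if $W^c_\gamma(y)\subset W^s(y)$, i.e.\ only if the central plaques of $K$ are genuinely stable. That inclusion is exactly what the paper proves as Claim 3, and the proof of Claim 3 uses the sensitive-dependence hypothesis; in the other alternative of the dichotomy it can fail, so without that dichotomy your ``heteroclinic back to $K$'' step has no foundation. Second, the ``no sources'' argument via a limit measure $\mu$ only yields a nonnegative $E^c$-Lyapunov exponent for $\mu$ (the $E^c$-eigenvalues of $O_n$ could tend to $1$ from above), not a positive one, so you cannot conclude $K$ is a repeller. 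The paper does not try to rule sources out; in its Case 2 the Hausdorff-approximating periodic orbits $P_n$ are allowed to be sources, and instead the paper examines the periodic central interval $W^c_\gamma(p_n)$, which (being a periodic arc of a Kupka--Smale system with $E^u$ expanding) carries finitely many periodic points of indices $0$ and $1$ alternately, and then singles out one specific index-$1$ saddle $q$ on that interval --- the one nearest the crossing point with $W^u_\gamma(y)$ --- that the geometry forces into the chain class of $K$. You would need a replacement for this step rather than a blanket ``no sources'' claim.
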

To prove Theorem \ref{mainprop} we use the $\delta$-interval
argument taken from Pujals-Sambarino \cite{PS2}, combined with ideas
from the more recent central model theory of Crovisier \cite {Cro2}.
\begin{proof}
Since $E^u$ is expanding, the stable manifolds theorem guarantees a
family of local unstable manifolds $W_{loc}^u(x)$ tangent to $E^u$
at every $x\in K$. There is a neighborhood $U_0$ of $K$ such that
$W_{loc}^u(x)$ is defined for every $x\in \bigcap_{n\leq 0}
f^n(U_0)$. For any point $x\in\bigcap_{n\leq 0} f^n(U_0)$ and $y\in
W_{loc}^u(x)$, the distance $d(f^{-n}(x),f^{-n}(y))$  converges
exponentially to $0$.

There is a family of central manifolds $W_{loc}^c(x)$ tangent to
$E^c$ at every $x\in K$. The definition is more delicate.  Indeed,
by Hirsch-Pugh-Shub \cite{HPS} (also see \cite{PS2}), there is (not
uniquely) a continuous map
$$\phi^c: \bigcap_{n\geq 0} f^n(U_0)\to Emb([-1,1],M)$$ that gives a
family of central manifolds
$$W_{loc}^c(x)=\phi^c(x)[-1,1]$$
such that
$$\phi^c(x)(0)=x,$$
$$T_xW_{loc}^c(x)=E^c(x).$$ The family is invariant in the
sense that, for any $\epsilon>0$, there exist $\epsilon'>0$ such
that
$$f(W^c_{\epsilon'}(x))\subset W^c_{\epsilon}(fx),$$ where
$W_{\varepsilon}^c(x)=\phi^c(x)[-\varepsilon,\varepsilon]$. The
embedded disk $W_{loc}^c(x)$ is called the {\it center manifold}
through $x$. We fix such a map $\phi^c$ in this section, and hence
fix a family $W_{loc}^c(x)$ of central manifolds for $x\in K$. There
exists a neighborhood $U_0$ of $K$ such that the center manifolds
$W_{loc}^c(x)$ are defined for every $x\in \bigcap_{n\geq 0}
f^n(U_0)$. We often call a center manifold or a subinterval of it as
a {\it central segment}.

For any central segment $I=\phi^c(x)[0,\epsilon]$ or
$I=\phi^c(x)[-\epsilon,0]$, we say $I$ has length $\epsilon$,
denoted by $l(I)=\epsilon$. We will say  a central segment $I$ is
{\it based on} $x\in K$ if $x$ is an end point of $I$. As
 in Pujals-Sambarino \cite{PS2}, a non-trivial
central segment $I$ based on $x\in K$ is called a $\delta$-$E^c$
{\it segment} if
$$l(f^{-n}(I))\leq\delta$$ for any $n\geq 0$. If $I$ is a $\delta$-$E^c$
segment, so is $f^{-n}(I)$ for any $n\geq 0$. If
$0<\delta_1<\delta_2$, then a $\delta_1$-$E^c$ segment is
automatically  a $\delta_2$-$E^c$ segment. Note that if $I$ is a
$\delta$-$E^c$ segment, some bigger $I'\supset I$ could also be a
$\delta$-$E^c$ segment. But one can always extend $I$ to a biggest
$\delta$-$E^c$ segment.

\vskip 0.3 cm

{\noindent\bf Case 1.}  For any $\delta>0$, there is a
$\delta$-$E^c$ segment $I$ based on some point of  $K$.

This condition is weaker than to say there is a Lyapunov stable
point $x\in K$ which means,  for any $\delta>0$, there is a
$\delta$-$E^c$ segment $I$ based on the same $x\in K$. (Here we
consider negative iterates, and consider a one-side neighborhood $I$
of $x$ only.)

Let $\delta>0$ be arbitrarily given. We fix $\delta$ till the end of
Case 1. Let $I$ be a $\delta$-$E^c$ segment based on some point
$z\in K$. Denote $I_{f^{-n}z}$ the biggest $\delta$-$E^c$ segment
containing $f^{-n}(I)$. Note that $$f^{-1}
(I_{f^{-n}z})\not=I_{f^{-n-1}z}$$ in general. Nevertheless
$$f^{-k}(I_{f^{-n}z})\subset I_{f^{-n-k}z}$$ for all $k\geq 0.$

\vskip 0.3 cm {\noindent\bf Claim 1.} There is a subsequence
$n_k\to+\infty$ such that $l(I_{f^{-n_k}z})\to 0$ as $k\to\infty$.

In fact,  suppose for contradiction
$$\inf\{l(I_{f^{-n}z}):n\geq 0\}>0.$$  Since $K$ is minimal,
there exist positive integers $m_1>m_2$ such that
$$W_{loc}^u(f^{-m_1}z)\cap I_{f^{-m_2}z}\neq\emptyset.$$
Note that $I_{f^{-m_2}z}$ is also a $\delta-E^c$ segment and
$f^{-m_2}I\subset I_{f^{-m_2}z}$. By Theorem 3.1 of \cite{PS3}, the
$\alpha$-limit set
$$\alpha(I_{f^{-m_2}z})=\bigcup_{x\in I_{f^{-m_2}z}}\alpha(x)$$ falls into one of the following
four cases:
\begin{enumerate}
\item $\alpha(I_{f^{-m_2}z})\subset {C}$ where ${C}$ is a periodic simple closed curve normally
contracting for $f^{-m}$ where $m$ is the period of ${C}$ such that
$f^{-m}|_C$ has no periodic points;
\item There exists a normally attracting periodic arc $J$ such that $I_{f^{-m_2}z}\subset W^u(J)$ and
$f^k$ restricted to $J$ ($k$ being the period of $J$) is the
identity map on $J$;
\item $\alpha(I_{f^{-m_2}z})\subset {\rm Per}(f)$. Moreover, one of the periodic points is either a semi-expanding
periodic point or an expanding one.
\item $I_{f^{-m_2}z}$ is wandering.

\end{enumerate}

Since, as mentioned above, there exist positive integers $m_1>m_2$
such that
$$W_{loc}^u(f^{-m_1}z)\cap I_{f^{-m_2}z}\neq\emptyset,$$
Case (4) is ruled out. We verify that each of the other three cases
leads to a contradiction. In Case 1 ${\rm Orb}(C)$ is normally
expanding hence locally maximal.
 By Item (5) of Proposition \ref
{Generic}, there is a periodic orbit $P$ of $f$ in any small
neighborhood of ${\rm Orb}(C)$. Then $P\subset {\rm Orb}(C)$. Thus
$f^m|_C$ has periodic points, ruling out Case 1. Case 2 is directly
ruled out because $f$ is Kupka-Smale. In Case 3 $\alpha(z)$ is a
periodic orbit. This contradicts that $K$ is a non-trivial minimal
set because $z\in K$. This proves Claim 1.

\vskip 0.3 cm

{\noindent\bf Claim 2.}  There is a $\delta$-$E^c$ segment $J$ based
on some point $a\in K$ such that $J$ is contained in the chain class
of $K$.

Let $n_k$ be the sequence given in Claim 1. Take $I_k$ to be an
$E^c$-segment based on $f^{-n_k}z$ that is slightly larger than
$I_{f^{-n_k}z}$. Since $I_{f^{-n_k}z}$ is a biggest $\delta$-$E^c$
segment and $I_k$ is strictly larger, one of the (negative) iterates
of $I_k$ has length near $\delta$. Since $l(I_{f^{-n_k}z})\to 0$ by
Claim 1, we may assume $l(I_k)\to 0$ hence there is an integer $t_k$
such that $l(f^{-t_k}I_k)\geq\delta/2$ but $l(f^{-i}I_k)<\delta/2$
for all $0\leq i<t_k$. By taking subsequences, we may assume
$f^{-t_k}I_k$ accumulate to a non-trivial central segment $I'$ based
on a point $b\in K$. Since
 $l(I_k)\to 0$, the segment $I'$
goes into $K$ in the sense of chains, i.e., for any point $y\in I'$
and any $\varepsilon>0$, there is an $\varepsilon$-pseudo orbit
$y=x_0,x_1,\cdots, x_n$ such that $x_n\in K$.

Let $I'_0=I'$, and
$$I'_n=f^{-1}(I'_{n-1})\cap W_{loc}^c(f^{-n}(b)).$$
Being preimages of $I'$, $I'_n$ also goes into $K$ in the sense of
chains.

We search for a non-trivial $E^c$-segment that not only goes into
$K$, but also ``comes from" $K$, in the sense of chains. If
$$\inf\{l(I'_n):n\geq 0\}=0,$$ then one can take positive integers
$n_k$ and $m_k$ such that $l(I'_{n_k})\geq\delta/2$,
$l(f^{-m_k}(I'_{n_k}))\to 0$, and $l(f^{-i}(I'_{n_k}))\leq \delta$
for all $0\leq i\leq m_k$. Let $J$ be an accumulation segment of
$I'_{n_k}$. It is easy to see $J$ comes from and goes into $K$ in
the sense of chains, i.e.,  for any $y\in J$ and any
$\varepsilon>0$, there exists an $\varepsilon$-pseudo orbit starting
from $K$ and ending at $y$, and an $\varepsilon$-pseudo orbit
starting from $y$ and ending at $K$. In other words, $J$ is
contained in the chain class of $K$.

On the other hand, if
$$\inf\{l(I'_n):n\geq 0\}>0,$$
then by the minimality of $K$,  we can find a subsequence of $I'_n$
accumulating to some central segment $J$  such that $J\cap I$ is a
non-trivial interval, where $I$ is the $\delta$-$E^c$ segment based
on $z\in K$ given at the beginning of the proof for Case 1. (More
detailed discussion on the orientations of the central models of
Crovisier \cite {Cro2} ensures that $J$ can be chosen so that $J$
and $I$ are on the same side of $z$ so that $J\cap I$ is not a
single point $z$.) Then $J$ still goes into $K$ in the sense of
chains. But $l(f^{-n_k}I)\to 0$, hence $I$ comes from $K$ in the
sense of chains. Thus $J\cap I$ comes from and goes into $K$ in the
sense of chains. In other words, $J\cap I$ is contained in the chain
class of $K$.  This proves Claim 2.

Now let  $J$ be a $\delta$-$E^c$ segment based on $a\in K$ that
meets the requirement of Claim 2.  Note that $\bigcup_{y\in
J}W_{\delta}^u(y)$ forms a neighborhood of $J$ in $M$. Since
$f\in\mathcal{R}$, and since $J$ is contained in the chain class of
$K$, by Item 5 of Proposition \ref{Generic}, there is a periodic
point $p\in\bigcup_{y\in {\rm int}(J)}W_{\delta}^u(y)$. Thus there
exists a point $y_0\in {\rm int}(J)$ such that
$$d(f^{-n}y_0,f^{-n}(p))\to 0, ~ n\to+\infty,$$ hence $p$ is
contained in the chain class of $K$. Moreover,
$$d(f^{-n}y_0,f^{-n}(p))\le \delta, ~ d(f^{-n}y_0,f^{-n}(a))\le
\delta$$ for all $n\ge 0$. Hence ${\rm Orb}(p)$ is contained in the
$2\delta$-neighborhood of $K$. Since $\delta$ can be taken arbitrary
from the very beginning of the proof for Case 1, this proves Theorem
\ref{mainprop} in Case 1.

\vskip 0.3cm
 {\noindent\bf Case 2.} For some $\delta>0$, there is no $\delta$-$E^c$
segment based on a point of $K$.

This condition is sometimes referred to as ``sensitive dependence on
initial conditions" (see \cite {B}) which, in our case,  means there
is $\delta>0$ such that, for any $x\in K$ and any non-trivial
$E^c$-segment $I$ based on $x$, there is $m\ge 1$ such that
$l(f^m(I))>\delta$.

For any $x\in K$, denote
$$W_\gamma^{+c}(x)=\phi^c(x)[0,\gamma], ~~~~ W_\gamma^{-c}(x)=\phi^c(x)[-\gamma,0].$$

\vskip 0.3cm
 {\noindent\bf Claim 3.}
There is $\gamma\in(0, \delta)$ such that,  for any $x\in K$,
$l(f^n(W_\gamma^{\pm c}(x)))\to 0$ as $n\to+\infty$, where
$W_\gamma^{\pm c}(x)$ means ``$W_\gamma^{+c}(x)$ and
$W_\gamma^{-c}(x)$".

This means $W_\gamma^{c}(x)\subset W^s(x)$ for any $x\in K$. (Recall
$W_\gamma^{c}(x) =W_\gamma^{+c}(x)\cup W_\gamma^{-c}(x)$.) Thus
Claim 3 says that, in dimension 1, sensitive dependence on initial
conditions for $f^{-1}$ with one side neighborhoods implies uniform
size of stable manifolds for $f$ with two sides neighborhoods. The
converse is obvious.

We first prove there is $\gamma\in(0, \delta)$ such that
$$l(f^n(W^{\pm c}_\gamma(x)))\le \delta$$ for any $x\in K$
and $n\geq 0$. Suppose for the contrary, for any $\gamma>0$ there
exists a point $x\in K$ and a positive integer $n_x$ such that
$$l(f^{n_x}(W_\gamma^{+(or -)c}(x)))> \delta.$$ Without loss of
generality, we assume that
$$l(f^{n_x}(W_\gamma^{+(or -)c}(x)))=
\delta$$ but $$l(f^k(W_\gamma^c(x)))< \delta$$ for any $0\leq
k<n_x$. It is easy to see that $n_x\to+\infty$ as $\gamma\to 0$. We
may assume $f^{n_x}(W_\gamma^{+(or -)c}(x))$ accumulate to an arc
$I$ and $f^{n_x}(x)$ accumulate to some point $y\in K$. It is easy
to check that $I$ is a $\delta$-$E^c$ segment, contradicting the
assumption.

Now we prove for this $\gamma\in(0, \delta)$ and any $x\in K$,
$$l(f^n(W_\gamma^{\pm c}(x)))\to 0$$ as $n\to+\infty$. Suppose
there is  $x\in K$ such that
$$l(f^n(W_\gamma^{+(or -) c}(x)))\nrightarrow 0.$$ Then there exist
$\eta>0$ and a sequence of positive integers $n_k$ such that
$$l(f^{n_k}(W_\gamma^{+(or -) c}(x)))>\eta.$$ We may assume
$f^{n_k}(W_\gamma^{+(or -) c}(x))\to J$ and $f^{n_k}x\to z\in K$.
Then $J$ is non-trivial. Since
$$l(f^n(W_\gamma^{\pm c}(x)))\le \delta$$
for any $n\ge 0$, it is easy to see that $J$ is a $\delta$-$E^c$
segment, contradicting the assumption. This ends the proof of Claim
3.
\end{proof}

Thus $W_\gamma^{c}(x)\subset W^s(x)$ for any $x\in K$. Since $K$ is
a minimal set, by Item (5) of Proposition \ref{Generic}, there are
periodic orbits $P_n$ that approach $K$ in the Hausdorff metric.
Take $p_n\in P_n$ such that $p_n\to y\in K$. We may assume that
$W^u_{loc}(p_n)$ and $W^u_{loc}(y)$ both have size at least $\gamma$
and we simply denote them to be $W^u_{\gamma}(p_n)$ and
$W^u_{\gamma}(y)$. Note that, while $W_\gamma^{c}(y)\subset W^s(y)$,
it is not known if $W_\gamma^{c}(p_n)\subset W^s(p_n)$. What is
known is that, being a periodic interval of a Kupka-Smale system
$f$, $W_\gamma^{c}(p_n)$ has at most finitely many periodic points
of the same (or doubled, depending on $f^{\pi(p_n)}$ preserves or
flips the orientation of the interval) period as that of $p_n$.
Since $E^u$ is expanding, these periodic points have index 0 and 1,
alternately. Now we assume $p_n$ and $y$ are close enough so that
$$W^{u}_{\gamma}(p_n)\cap W_{\gamma}^c(y)\neq\emptyset, ~~~~
W^{u}_{\gamma}(y)\cap W_{\gamma}^c(p_n)\neq\emptyset.$$ We may
assume $\gamma$ was chosen small enough so that each intersection
contains a single point. Let $z$ be the unique point in
$W^{u}_{\gamma}(y)\cap W_{\gamma}^c(p_n).$ On the $E^c$-interval
$[p_n, z]$, the periodic point $q$ that is closest to $z$ can not be
a source, because the unstable manifolds of $y$ and $q$ can not
intersect. That is, $q$ must be a saddle of index 1 and $[q,
z]\subset W^s(q)$. Note that $W^{u}_{\gamma}(q)\cap
W_{\gamma}^c(y)\neq\emptyset$. In summary,
$$W^{u}_{\gamma}(q)\cap W_{\gamma}^s(y)\neq\emptyset, ~~~~
W^{u}_{\gamma}(y)\cap W^s(q)\neq\emptyset.$$ Thus
 $y$ and
$q$ are in the same chain class. This proves Theorem \ref{mainprop}
in Case 2, hence ends the proof of Theorem \ref{mainprop}.

\section {Proof of Theorem A}

In this section we complete the proof of Theorem A. First we prove
it for a generic $f$.

\begin{Proposition}\label{Mainprop2}
Let $f\in\mathcal{R}$, and let $C_f(p)$ be a structurally stable
chain class of $f$. If  ${\rm Ind}(p)=1$ or $\dim M-1$, then
$C_f(p)$ is hyperbolic.
\end{Proposition}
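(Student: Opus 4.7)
My plan is to argue by contradiction, combining the tools assembled in Sections 2--4. By symmetry I assume $\mathrm{Ind}(p)=1$ so that $\dim E=1$; the case $\mathrm{Ind}(p)=\dim M-1$ will follow by running the same argument for $f^{-1}$, since structural stability of $C_f(p)$ is preserved under time reversal and the roles of $E$ and $F$ swap. The goal is to show $E$ is uniformly contracting on $C_f(p)$; the dual fact that $F$ is uniformly expanding I will obtain by re-running the entire argument under $f^{-1}$, where $E$ now plays the role of the 1-dimensional unstable direction and the symmetric versions of Proposition \ref{Proposition31} and Theorem \ref{mainprop} apply.

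Suppose, for contradiction, that $E$ is not contracting on $C_f(p)$. Zorn's lemma yields a minimally non-contracting compact invariant subset $\Lambda\subset C_f(p)$, and Proposition \ref{Proposition31} endows $\Lambda$ with a partially hyperbolic splitting in which $F|_\Lambda$ is expanding and the central Lyapunov exponent along $E$ vanishes pointwise. The next step, which I expect to be the most delicate, is to promote $\Lambda$ to a non-trivial topologically minimal set, as required to invoke Theorem \ref{mainprop}. Non-triviality is immediate, since by Proposition \ref{Proposition1} any periodic orbit inside $C_f(p)$ is hyperbolic of index $1$ and so $E$ already contracts on it. For topological minimality: if $K\subsetneq\Lambda$ were a proper compact invariant subset, the defining property of minimal non-contraction would force $E|_K$ to be uniformly contracting, so $\log\|Df^m|_E\|\le\log\lambda_0<0$ everywhere on $K$ for some $\lambda_0<1$. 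Any $f$-invariant probability measure $\mu$ on $K$ (which exists by Krylov-Bogoliubov) would therefore satisfy $\int\log\|Df^m|_E\|\,d\mu\le\log\lambda_0<0$; but $\mu$ is also supported on $\Lambda$, and combining Birkhoff's ergodic theorem with the vanishing central exponent on $\Lambda$ forces $\int\log\|Df^m|_E\|\,d\mu=0$, a contradiction.

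With $\Lambda$ now established as a non-trivial minimal set satisfying the hypotheses of Theorem \ref{mainprop}, any shrinking neighborhood system $U_n\downarrow\Lambda$ produces periodic orbits $O_n\subset U_n$ lying in the chain class of $\Lambda$, which equals $C_f(p)$. Proposition \ref{Proposition1} gives that these $O_n$ are homoclinically related to $p$, and Proposition \ref{Proposition23} then supplies the uniform periodic contraction estimate
$$\prod_{i=0}^{k_n-1}\|Df^m|_{E(f^{im}q_n)}\|<C\lambda^{k_n},\qquad k_n=\pi(q_n)/m,$$
for any $q_n\in O_n$. Since $\Lambda$ is non-trivial minimal and periodic points of the Kupka-Smale $f$ are isolated at each fixed period, $\pi(q_n)\to\infty$. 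Passing to a weak-$*$ subsequential limit $\mu$ of the uniform probability measures on $O_n$ yields an $f$-invariant probability measure supported on $\Lambda$, and by continuity of $\log\|Df^m|_E\|$ on $C_f(p)$ together with the above estimate one obtains $\int\log\|Df^m|_E\|\,d\mu\le\log\lambda<0$. But $\mu$ being supported on $\Lambda$ forces this integral to be zero, by the vanishing central exponent, delivering the final contradiction and proving $E$ contracting on $C_f(p)$. Beyond the topological-minimality step highlighted above, the remaining obstacle is that $F$ expanding does not follow automatically from $E$ contracting plus the dominated splitting; to close this gap one must transport the entire argument through $f^{-1}$, where the $1$-dimensional $E$ plays the role of the unstable bundle.
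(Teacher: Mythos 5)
Your route to showing $E$ is contracting is sound and diverges from the paper's in two minor but genuine ways: you prove $\Lambda$ itself is minimal (via Birkhoff/Krylov--Bogoliubov) rather than simply extracting a minimal subset $K\subset\Lambda$ as the paper does, and you derive the final contradiction from a weak-$*$ limit of periodic invariant measures rather than via the Pliss lemma. Both substitutions work (with the small caveat that ``$E|_K$ contracting'' gives $\|Df^{m'}|_{E}\|\le\lambda_0$ for some $m'$ possibly different from the $m$ of Proposition~\ref{Proposition23}; the measure computation still goes through by rescaling), and the measure-theoretic version is arguably a little cleaner than the Pliss-lemma bookkeeping.

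The genuine gap is in the final step, the promotion from ``$E$ is contracting'' to ``$F$ is expanding.'' You propose to close it by transporting the whole argument through $f^{-1}$, ``where the $1$-dimensional $E$ plays the role of the unstable bundle.'' That substitution cannot do the job. Under $f^{-1}$ the bundle whose contraction you would need to establish is $F$, which has dimension $\dim M-1$, not $1$; but Proposition~\ref{Proposition31} and Theorem~\ref{mainprop} both require the candidate central/contracting bundle to be one-dimensional (the proof of Proposition~\ref{Proposition31} explicitly uses $\dim E=1$ to pass from a bound on $\|Df^{-m}|_E\|$ to one on $\|Df^{-m}|_F\|$ via domination; the $\delta$-$E^c$ interval machinery in Theorem~\ref{mainprop} is intrinsically one-dimensional). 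Running the argument under $f^{-1}$ with $E$ as the one-dimensional unstable bundle only reproves what you already have (contraction of $E$, read backwards); it says nothing new about $F$. The paper closes this gap not by symmetry but by citing the Bonatti--Gan--Yang theorem (the cited Proposition from~\cite{BGY}): once $E$ is uniformly contracting on $H(p)$ and $F$ is uniformly expanding \emph{at the periods} over periodic points homoclinically related to $p$ --- which is exactly the content of Proposition~\ref{Proposition23}(2) --- then $F$ is uniformly expanding on all of $H(p)$. This external input, not a time-reversal, is what finishes the proof; without it your argument establishes only half of the hyperbolic splitting.
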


\begin{proof}
 Let $$T_{C_f(p)}M=E\oplus F$$ be the dominated splitting
given right before Proposition \ref{Proposition31}. We take the case
${\rm Ind}(p)=1$ and hence ${\rm dim}E=1$. The case ${\rm
Ind}(p)=\dim M-1$ can be treated similarly. We prove $E$ is
contracting.

Suppose $E$ is not contracting. Then there is a minimally
non-contracting set $\Lambda\subset C_f(p)$ of $E$. By Proposition
\ref{Proposition31}, $E\oplus F$ restricted to $\Lambda$ is
partially hyperbolic. More precisely, write
$$T_\Lambda M=E^c\oplus E^u, $$ where $E^c=E|_\Lambda$ and $E^u=F|_\Lambda$, then
$E^u$ is expanding, and
$$\lim_{n\to+\infty}\frac{1}{n}\sum_{i=0}^{n-1}\log\|Df^m|_{E^c(f^{im}x)}\|=0$$
for any $x\in \Lambda$.   Take any minimal set $K\subset \Lambda$.
$K$ must be non-trivial because otherwise, by the limit equality,
$K$ reduces to a non-hyperbolic periodic orbit, contradicting
$f\in\mathcal{R}$.  By Theorem \ref{mainprop}, there exist periodic
orbits $Q_n\subset C_f(p)$ such that $Q_n\to K$ in the Hausdorff
metric. By Proposition \ref{Proposition1}, each $Q_n$ is
homoclinically related to ${\rm Orb}(p)$. By Proposition
\ref{Proposition23}, there exist $\lambda\in(0,1)$ and a positive
integer $m$ such that
$$\prod_{i=0}^{k-1}\|Df^m|_{E^c(f^{im}(q))}\|<\lambda^k$$
for  $q\in Q_n$, where $k=[\pi(q)/m]$.  Note that since $K$ is
non-trivial and hence $\pi(Q_n)\to \infty$, by slightly enlarging
$\lambda$ if necessary we have put $C=1$ in the inequality. Take
$\lambda'\in (\lambda, 1)$. By Pliss's Lemma, there are $q_n\in Q_n$
such that
$$\prod_{i=0}^{j-1}\|Df^m|_{E^c(f^{im}(q_n))}\|<(\lambda')^j$$
for all $j\geq 1$ (note that, since $Q_n$ is periodic, $q_n$ can be
taken so that $j$ runs over all positive integers). Taking a
subsequence if necessary we assume $q_n\to x\in  K$. Then
$$\limsup_{n\to+\infty}\frac{1}{n}\sum_{i=0}^{n-1}\log\|Df^m|_{E^c(f^{im}(x))}\|<{\rm log}\lambda'.$$
This contradicts the above limit equality. Thus $E$ is  contracting.

 Note that, by Proposition
\ref{Proposition23}, $F$ is uniformly expanding at the periods for
all periodic points $q$ homoclinically related to $p$. (Here the
phrase ``uniformly expanding at the periods" means the inequality in
Theorem \ref{PLC}, as remarked after Theorem \ref{PLC}.) Thus
Proposition \ref{Mainprop2} follows directly from the following
proposition.
\end{proof}

\begin{Proposition} [\cite{BGY}] Let $f$ be a diffeomorphism and $p$
be a hyperbolic periodic point of $f$. Assume the homoclinic class
$H(p)=H(p, f)$ admits a dominated splitting $T_{H(p)}M = E\oplus F$
such that $E$ is contracting and $\dim(E)={\rm Ind}(p)$. If $F$ is
uniformly expanding at the periods for all periodic points $q$
homoclinically related to $p$, then $F$ is uniformly expanding on
$H(p)$.
\end{Proposition}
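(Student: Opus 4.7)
The plan is to argue by contradiction. Suppose $F$ is not uniformly expanding on $H(p)$. Fix the constant $m$ from the domination and set $\phi(x) = \log\|Df^{-m}|_{F(x)}\|$ on $H(p)$. By a subadditive / Pliss-type argument along orbits in the compact invariant set $H(p)$, non-expansion produces a point $y \in H(p)$ with $\limsup_n \tfrac{1}{n}\sum_{i=0}^{n-1}\phi(f^{-im}y) \geq 0$. Averaging Dirac masses along the backward orbit of $y$ and extracting a weak-$*$ limit, then passing to an ergodic component, yields an ergodic $f$-invariant probability measure $\mu$ supported in $H(p)$ with $\int \phi\, d\mu \geq 0$, i.e.\ the integrated backward Lyapunov exponent along $F$ is non-negative.

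Next I would close $\mu$ by a periodic orbit. Applying Ma\~n\'e's ergodic closing lemma to $\mu$ gives, after an arbitrarily small $C^1$-perturbation $g$ of $f$, a periodic orbit $\mathcal{O}_g$ whose empirical measure and Lyapunov spectrum approximate those of $\mu$. The dominated splitting $E\oplus F$ extends to a neighborhood of $H(p)$, is robust under $C^1$-small perturbations, and $E$ remains uniformly contracting after small perturbation; hence $\mathcal{O}_g$ inherits a dominated splitting $E_g\oplus F_g$ with $E_g$ contracting and with $F_g$-expansion at the period strictly weaker than any prescribed uniform bound, since this expansion rate equals the average of $\phi$ along $\mathcal{O}_g$, which is close to $\int\phi\,d\mu \geq 0$. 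In particular, $\dim E_g = \dim E = \mathrm{Ind}(p)$, so the stable index of $\mathcal{O}_g$ equals $\mathrm{Ind}(p_g)$.

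The main obstacle is the final step: producing a periodic orbit that is actually homoclinically related to $p_g$, so that the hypothesis about uniform expansion at the period applies and yields a contradiction. Since $\mathcal{O}_g$ accumulates on $\mathrm{supp}(\mu) \subset H(p)$, it lies in the same chain class as $p$, and $H(p)$ contains a dense set of periodic orbits homoclinically related to $p$ (with uniformly sized local strong-stable plaques coming from the uniform contraction of $E_g$, and center-unstable plaques from the Hirsch--Pugh--Shub theory applied to the dominated splitting). Using Hayashi's $C^1$ connecting lemma, an additional arbitrarily small $C^1$-perturbation (localized away from $\mathcal{O}_g$) creates transverse intersections of $W^s(\mathcal{O}_g)$ with $W^u(\mathrm{Orb}(p_g))$ and vice versa. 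The delicate point is to carry out these connections without destroying either the dominated structure along $\mathcal{O}_g$ or the weak $F_g$-expansion at its period; this is where a Gourmelon-type refined Franks lemma is essential. Once $\mathcal{O}_g$ is homoclinically related to $p_g$, the weak expansion of $F_g$ at its period contradicts the hypothesis applied to $g$ inside a fixed $C^1$-neighborhood of $f$, completing the proof.
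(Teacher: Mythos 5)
The paper does not actually prove this Proposition; it is cited from [BGY] (Bonatti--Gan--Yang) without a proof. That said, your argument has a genuine gap in its final step. You close the measure $\mu$ (obtained from non-expansion of $F$) by Ma\~n\'e's ergodic closing lemma, which produces a periodic orbit $\mathcal{O}_g$ of a \emph{perturbed} diffeomorphism $g$, and you then invoke ``the hypothesis applied to $g$ inside a fixed $C^1$-neighborhood of $f$.'' But the hypothesis of the Proposition is a statement about $f$ alone: $F$ is uniformly expanding at the periods of all periodic points \emph{of $f$} homoclinically related to $p$. It says nothing about periodic orbits of nearby diffeomorphisms, and this uniformity is not automatically $C^1$-robust --- after a small perturbation, $g$ can have periodic orbits homoclinically related to $p_g$ with arbitrarily weak $F_g$-expansion at the period that do not shadow any periodic orbit of $f$. (In the specific context where the present paper applies the Proposition, the robustness does hold because the expansion estimate is obtained via the star property of the periodic linear cocycle, which is a $C^1$-open condition; but a proof of the Proposition must work under its own stated hypotheses, not under those of a later application.)

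The route taken in [BGY] avoids perturbing $f$ entirely. Because $E$ is uniformly contracting one has strong-stable plaques of uniform size; if $F$ fails to be uniformly expanding, Pliss-type arguments along a suitable orbit in $H(p)$ produce points with controlled, near-neutral $F$-behavior, hence center-unstable plaques of uniform size along Pliss times. A Liao-type shadowing or selecting argument then yields periodic orbits \emph{of $f$ itself}, of stable index $\dim E$, with arbitrarily weak $F$-expansion at the period, and the uniform plaque sizes force these periodic orbits to be homoclinically related to $p$. This gives the contradiction directly, inside $f$, which is exactly what the ergodic closing lemma cannot deliver. To repair your proof you would need to replace the ergodic-closing-lemma plus connecting-lemma machinery with a shadowing argument that stays within $f$.
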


Now we prove Theorem A without assuming $f$ is generic. We argue
with the shadowing property.  We say that $C_f(p)$ is $C^1$-{\it
robustly shadowable} (in \cite{WGW} it is called {\it stably
shadowable}) if there exists a neighborhood $\mathcal{U}(f)$ of $f$
such that for any $g\in \mathcal{U}(f)$, $C_g(p_g)$ has the
shadowing property, where $p_g$ is the continuation of $p$.

\begin{Proposition}
Let $f\in {\rm Diff}(M)$. If $C_f(p)$ is structurally stable, then
$C_f(p)$ is $C^1$-robustly shadowable.
\end{Proposition}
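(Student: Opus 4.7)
The strategy is to reduce the non-generic case to Proposition \ref{Mainprop2} by perturbing $f$ into the residual set $\mathcal{R}$. Once hyperbolicity is obtained on the generic side, the classical shadowing lemma gives the shadowing property, which is then transferred back to $C_g(p_g)$ for each $g$ near $f$ via the topological conjugacy provided by structural stability.

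Let $\mathcal{U}$ be the $C^{1}$-neighborhood witnessing structural stability of $C_f(p)$. The first step is a standard openness observation: for every $g\in\mathcal{U}$ there is a smaller neighborhood $\mathcal{U}_g\subset\mathcal{U}$ of $g$ on which $C_g(p_g)$ is itself structurally stable in the sense of the definition. Indeed, for any $h\in\mathcal{U}_g$ the continuation is transitive, $(p_g)_h=p_h$, so composing the conjugacies $h_g:C_f(p)\to C_g(p_g)$ and $h_h:C_f(p)\to C_h(p_h)$ supplied by the hypothesis yields a homeomorphism $C_g(p_g)\to C_h(p_h)$ intertwining $g$ and $h$.

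Now fix $g\in\mathcal{U}$ arbitrarily. Since $\mathcal{R}$ is residual, pick $g'\in\mathcal{R}\cap\mathcal{U}_g$. By the openness step, $C_{g'}(p_{g'})$ is structurally stable, and being in $\mathcal{R}$ brings it within the scope of Proposition \ref{Mainprop2}; hence $C_{g'}(p_{g'})$ is hyperbolic. A hyperbolic chain component is locally maximal (by local product structure applied in a small tubular neighborhood and the Smale spectral decomposition), so it enjoys the shadowing property with shadowing orbits inside the set. The composition $\phi=h_{g'}\circ h_g^{-1}:C_g(p_g)\to C_{g'}(p_{g'})$ is a topological conjugacy between $g|_{C_g(p_g)}$ and $g'|_{C_{g'}(p_{g'})}$ with both $\phi$ and $\phi^{-1}$ uniformly continuous on compact sets. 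A $\delta$-pseudo orbit in $C_g(p_g)$ is therefore sent by $\phi$ to a pseudo orbit in $C_{g'}(p_{g'})$ whose mesh tends to $0$ with $\delta$; shadowing the latter by a true $g'$-orbit in $C_{g'}(p_{g'})$ and pulling back via $\phi^{-1}$ yields the desired $\varepsilon$-shadowing $g$-orbit in $C_g(p_g)$. As $g\in\mathcal{U}$ was arbitrary, $C_f(p)$ is $C^{1}$-robustly shadowable.

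\textbf{Expected obstacle.} The only delicate point is ensuring that the shadowing orbit on the generic side can be taken inside $C_{g'}(p_{g'})$, so that the class-only map $\phi^{-1}$ can act on it; this relies on local maximality of the hyperbolic chain component. Everything else — openness of the structural-stability hypothesis, existence of a generic point in $\mathcal{U}_g$, and the standard ``conjugacy preserves shadowing'' transfer — is routine once this technical point is in place.
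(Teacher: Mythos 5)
Your argument is correct and follows essentially the same route as the paper: perturb into $\mathcal{R}$, invoke Proposition~\ref{Mainprop2} for hyperbolicity (hence shadowing), and transfer shadowing back through the topological conjugacy. The only difference is cosmetic: the paper picks a \emph{single} $f_0\in\mathcal{R}\cap\mathcal{U}$ and conjugates every $C_g(p_g)$ to $C_{f_0}(p_{f_0})$ via the common model $C_f(p)$, whereas you choose a separate generic $g'$ near each $g$, which forces you to establish the openness step explicitly; the paper's choice avoids that bookkeeping, but your version makes explicit the (correct) justification of the assertion ``since $C_f(p)$ is structurally stable, so is $C_{f_0}(p_{f_0})$'' that the paper states without proof, and your remark on local maximality of the hyperbolic chain class is precisely the point needed so that shadowing orbits live inside the set and the class-only conjugacy can act on them.
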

\begin{proof}
Fix a $C^1$ neighborhood ${\mathcal U}$ of $f$ in ${\rm Diff}(M)$
such that, for any $g\in {\mathcal U}$, there is a homeomorphism $h$
that conjugates $C_f(p)$ and $C_g(p_g)$. Take $f_0\in\mathcal{R}\cap
{\mathcal U}$. Since $C_f(p)$ is structurally stable, so is
$C_{f_0}(p_{f_0})$. By Proposition \ref{Mainprop2},
$C_{f_0}(p_{f_0})$ is hyperbolic, hence  has the shadowing property.
For every $g\in {\mathcal U}$, since $C_g(p_g)$ is conjugate to
$C_{f_0}(p_{f_0})$, $C_g(p_g)$ has the shadowing property too. Thus
$C_f(p)$ is $C^1$-robustly shadowable, proving the proposition.
\end{proof}

Thus our main result, Theorem A, follows from the following result:

\begin{theorem}[\cite{WGW}]
If $C_f(p)$ is $C^1$-robustly shadowable, then $C_f(p)$ is
hyperbolic.
\end{theorem}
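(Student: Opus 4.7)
The plan is to mirror the architecture of Sections 2--4 of the present paper, with robust shadowability of $C_f(p)$ playing the role that the topological conjugacy $h$ played there. The key point is that the shadowing property is incompatible with two kinds of local bifurcations: arcs of fixed points of some iterate, and normally-rotating discs of periodic points. Consequently, robust shadowability will force a star-type condition on periodic orbits in the class, which in turn will yield a dominated splitting, and the latter will then be shown to be hyperbolic via the minimally non-contracting machinery of Sections 3--4.

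First I would show that periodic points are dense in $C_f(p)$ and that every such periodic point is homoclinically related to $p$. Density is the classical consequence of shadowing plus chain transitivity: any $\delta$-pseudo orbit that returns close to its starting point is $\varepsilon$-shadowed by a genuine periodic orbit, and such pseudo orbits can be made to pass arbitrarily close to any prescribed point of the class. To obtain homoclinic relatedness, I would rerun the heteroclinic-cycle counting argument of Proposition \ref{Proposition1} with the conjugacy $h$ replaced by the following observation: whenever two periodic orbits of different indices coexisted in some $C_g(p_g)$ with $g\in\mathcal{U}$, the connecting lemma together with Gourmelon's Franks lemma (Proposition \ref{Gourmelon}) would produce a further perturbation $g'$ carrying an arc of fixed points of a high iterate of $g'$ inside $C_{g'}(p_{g'})$. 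Such an arc obstructs shadowing: chaining short jumps along the arc yields a $\delta$-pseudo orbit whose total displacement exceeds any prescribed $\varepsilon$, yet every point of the arc is fixed, so no true orbit $\varepsilon$-tracks it. This contradicts robust shadowability.

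Next I would establish the star condition of Proposition \ref{Proposition22}: there exist a neighborhood $\mathcal{U}$ of $f$ and $\lambda\in(0,1)$ such that for every $g\in\mathcal{U}$ and every periodic $q$ of $g$ homoclinically related to $p_g$, the derivative $Dg^{\pi(q)}(q)$ has no eigenvalue of modulus in $(\lambda,\lambda^{-1})$. The proof of Proposition \ref{Proposition22} already produces, from a counterexample, a perturbation $\tilde{g}$ and a closed arc (or $2$-disc) of fixed points of $\tilde{g}^{\pi(q)}$ inside $C_{\tilde{g}}(p_{\tilde{g}})$; here the contradiction is obtained from the failure of shadowing on that arc rather than from Kupka--Smaleness of a conjugate $f$. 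Applying Theorem \ref{PLC} to $Df$ over the union of periodic orbits in $C_f(p)$ then delivers a dominated splitting $T_{C_f(p)}M = E\oplus F$ with $\dim E = \mathrm{Ind}(p)$ and the uniform rate estimates of Proposition \ref{Proposition23}.

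Finally I would upgrade the dominated splitting to hyperbolicity exactly as in the present paper. Assume $\mathrm{Ind}(p)=1$, the other case being symmetric. The argument of Proposition \ref{Proposition31} uses only the star estimate just obtained: if $E$ failed to contract, any minimally non-contracting $\Lambda\subset C_f(p)$ would be partially hyperbolic with neutral central Lyapunov behaviour; Theorem \ref{mainprop} would supply periodic orbits $Q_n\subset C_f(p)$ of unbounded period Hausdorff-converging to a non-trivial minimal $K\subset\Lambda$; and Pliss's lemma would select on $Q_n$ points with central multiplicative rate arbitrarily close to $1$, contradicting the star estimate. Hence $E$ is contracting, and the Bonatti--Gan--Yang proposition from Section 5 upgrades expansion at the periods on $F$ to uniform expansion on $C_f(p)$. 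The main obstacle is the second paragraph: one must verify that the arc of fixed points produced by the $\beta$-rescaling genuinely lies in the chain class $C_{g'}(p_{g'})$, so that the hypothesis of robust shadowability directly applies, and that the witnessing pseudo-orbit can be taken inside the class; once that is in hand, the remainder of the proof is essentially a repackaging of the arguments of Sections 2--4.
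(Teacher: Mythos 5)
The paper does not give its own proof of this statement; it is quoted verbatim as a black-box result from \cite{WGW}. Your proposal is therefore an attempt to reconstruct that external proof, essentially by re-running Sections~2--4 with robust shadowability standing in for the conjugacy $h$. As such it is a reasonable outline, but it has a gap that is fatal as a proof of the theorem \emph{as stated}.

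The central difficulty is the codimension restriction. In your last paragraph you write ``Assume $\mathrm{Ind}(p)=1$, the other case being symmetric,'' and the upgrade from domination to hyperbolicity is carried out by the minimally non-contracting machinery of Proposition~\ref{Proposition31} and Theorem~\ref{mainprop}. But that machinery only works when $\dim E=1$: the paper itself marks the spot in the proof of Proposition~\ref{Proposition31} (``this is the place where we use the assumption $\dim E=1$ (otherwise the inequality would be about mininorm instead of norm)''), and Theorem~\ref{mainprop} requires the center bundle $E^c$ to be $1$-dimensional so that the Pujals--Sambarino $\delta$-$E^c$-segment and central-model arguments apply. The theorem you are trying to prove carries no codimension assumption --- indeed the entire structure of the present paper (proving Theorem~A \emph{only} in codimension one by exactly this route, while \cite{WGW} is invoked in full generality) indicates that the authors regard \cite{WGW} as a strictly more general result obtained by different means. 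Your proof therefore establishes at most the codimension-one subcase of the cited theorem, not the theorem itself.

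Two further, smaller gaps. First, Theorem~\ref{mainprop} and the heteroclinic-cycle counting of Proposition~\ref{Proposition1} both require $f\in\mathcal{R}$ (Kupka--Smale, Crovisier's density of periodic orbits, Gan--Wen's Item~4), but the statement is for arbitrary $f\in\mathrm{Diff}(M)$; you would need either to re-derive the required properties directly from robust shadowability (some, like density of periodic orbits in $C_f(p)$, do follow; others, like Item~4, do not obviously), or to pass to a nearby generic $g$ and then transport hyperbolicity of $C_g(p_g)$ back to $C_f(p)$, a step you do not address. Second, the passage from ``two periodic orbits of different indices coexist in $C_g(p_g)$'' to ``there is a further perturbation carrying an arc of fixed points inside the class'' needs the intermediate step of turning a heterodimensional cycle into a non-hyperbolic periodic orbit, which is not supplied by the connecting lemma plus Gourmelon's Franks lemma alone and requires its own perturbation argument.
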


\end{document}